\theoremstyle{plain}
\newtheorem{thm}{Theorem}[section]
\newtheorem{prop}[thm]{Proposition}
\newtheorem{lem}[thm]{Lemma}
\newtheorem{cor}[thm]{Corollary}
\newtheorem{conj}{Conjecture}
\newtheorem{defn}[thm]{Definition}
\newtheorem{exmp}[thm]{Example}
\begin{document}

\title{Numerical Measures for Two-Graphs}

\author{David M. Duncan}
\address{Coastal Carolina University, Conway, SC}
\email{dduncan@coastal.edu}
\author{Thomas R. Hoffman}
\email{thoffman@coastal.edu}
\author{James P. Solazzo}
\email{jsolazzo@coastal.edu}

\keywords{two-graphs, Seidel adjacency matrix, vertex switching}
\subjclass{05C50, 05C90}

\begin{abstract}
We study characteristics which might distinguish two-graphs by introducing
different numerical measures on the collection of graphs on $n$ vertices. Two conjectures are stated, one using these numerical measures and the other
using the deck of a graph, which suggest that there is a finite set of conditions differentiating two-graphs.
We verify that, among the four non-trivial non-isomorphic regular two-graphs on
$26$ vertices, both conjectures hold.
\end{abstract}

\maketitle

\section{Introduction}

The notion of a frame was introduced over 50 years ago in the work
of Duffin and Schaeffer \cite{KC}. However, in the last few years
frames have caught the attention of mathematicians from a variety of
disciplines. This is due in large part to the fact that until
recently little was known about frames. Furthermore, frame theory
has been shown to have a number of practical applications
encompassing quantization, signal reconstruction and coding theory,
to name a few. In \cite{BP} and \cite{HP}, Bodman, Holmes, and
Paulsen use frame theory to answer certain questions related to the
``lost package problem'', a problem in engineering. In these papers,
they show that a certain family of frames, two-uniform frames, are
optimal for one and two erasures. Strohmer and Heath, in \cite{SH},
prove similar results about two-uniform frames and make a clear
connection between finite frames and areas such as spherical codes,
equidistant point sets, two-graphs,
 and sphere packings.
The aforementioned three papers take advantage of known results and
classical constructions of two-graphs to prove several results about
two-uniform frames. Much of the theory on two-graphs can be found in
the works of J.J. Seidel, e.g., \cite{BMS}, \cite{CM}, \cite{Sei},
and \cite{vLS}. In fact, two-uniform frames are in one-to-one
correspondence with regular two-graphs. It is this relationship
between frame theory and graph theory which motivates the work in
this paper.

A two-graph is the collection of graphs obtained from a graph $X$ on
$n$ vertices by {\it switching} on every subset of the vertex set of
$X$. For this reason a two-graph is sometimes referred to as a {\it
switching class}. Given graphs $X$ and $Y$, if any pair of
representatives from their respective switching classes are
isomorphic, we say that the graphs $X$ and $Y$ are {\it switching
equivalent}. We will make the above terminology precise in Section
\ref{Sec:Switching}. In this paper we explore characteristics which
might differentiate the switching equivalent classes for graphs on
$n$ vertices. In the language of Seidel \cite{Sei}, we investigate
the isomorphism classes of two-graphs. This exploration led us to
conjecture that graphs, $X_1$ and $X_2$, are switching equivalent if
and only if a finite set of norm conditions are satisfied. This is
Conjecture \ref{oneequivalent} in Section \ref{Sec:OneNorms}. We
verify this conjecture for $n$ less than or equal to $10$ and also
for four ``special'' graphs on $26$ vertices. We considered these
four graphs ``special'' since they mark the first appearance of
nontrivial non-isomorphic regular two-graphs for fixed $n$.

This paper is organized as follows. In Section \ref{Sec:Switching}
we fix notation and emphasize the distinction between the switching class
and switching equivalent class of a graph. The
{\it infinity norm} of a graph is defined in Section \ref{Sec:InfNorms} and is shown not sufficient to determine the switching equivalent class of a graph.
The spectra of switching equivalent classes are considered in Section \ref{Sec:DS}.
Examples of cospectral classes are known, but we include
this section for completeness of the discussion. Section
\ref{Sec:Decks} introduces another new conjecture for switching
equivalent classes in terms of decks of graphs. In Section \ref{Sec:OneNorms}, the {\it $1$-norm} of a graph is defined along with a corresponding conjecture for switching equivalent classes. Section \ref{Sec:26} verifies the conjectures of Sections \ref{Sec:Decks} and \ref{Sec:OneNorms} on the smallest nontrivial example of non-isomorphic two-graphs on $n$ vertices. Lastly, Appendix \ref{App:Frames} gives an introduction to
frame theory and provides motivation for our definitions of the
{\it infinity norm} and the {\it $1$-norm} of a graph.

\section{Switching}\label{Sec:Switching}

All graphs considered in this paper will be simple, i.e. undirected,
without loops, without multiple edges, and finite. Denote by $A(X)$,
$V(X)$ and $E(X)$ the adjacency matrix, the set of vertices and the
set of edges of the graph $X$, respectively. We also use $I_n$ for the $n
\times n$ identity matrix and $J_n$ for the $n \times n$ matrix of
all ones.

\begin{defn}
Given a graph $X$ on $n$ vertices, the {\bf Seidel adjacency matrix}
of $X$ is defined to be the $n\times n$ matrix $S(X):=(s_{ij})$
where $s_{i,j}$ is defined to be $-1$ when $i$ and $j$ are adjacent
vertices, $+1$ when $i$ and $j$ are not adjacent, and 0 when $i=j$.
\end{defn}

The Seidel adjacency matrix of $X$ is related to the usual adjacency
matrix $A(X)$ by
$$
S(X)=J_n-I_n-2A(X).
$$

\begin{defn}
Let $X$ be a graph and $\tau \subseteq V(X)$.
Now define the  graph $X^{\tau}$ to be the graph that arises from
$X$ by changing all of the edges between $\tau$ and $V(X)- \tau$ to
nonedges and all the nonedges between $\tau$ and $V(X)-\tau$ to
edges. This operation is called {\bf switching} on the subset $\tau$, see \cite{GR}.
\end{defn}

The operation of switching is an equivalence relation on the
collection of graphs on $n$ vertices. This can be seen by observing
that if $\tau \subseteq V(X)$, then switching on $\tau$ is
equivalent to conjugating $S(X)$ by the diagonal matrix $D$ with
$D_{ii}=-1$ when $i \in \tau$ and $1$ otherwise. The {\bf switching
class} of $X$, denoted $[X]$, is the collection of graphs that can
be obtained from $X$ by switching on every subset of $V(X)$. A
switching class of graphs is also known as a {\bf two-graph}.

\begin{defn}
The graphs $X$ and $Y$ on $n$ vertices are called {\bf switching
equivalent} if $Y$ is isomorphic to $X^{\tau}$ for some $\tau
\subset V(X)$, see \cite{GR}.
\end{defn}

Switching equivalence is also an equivalence relation on the
collection of graphs on $n$ vertices. The {\bf switching equivalent
class} of $X$, denoted $[[X]]$, is the collection of graphs that can
be obtained from $X$ by conjugating $S(X)$ by a signed permutation
matrix, i.e. the product of a permutation matrix and a diagonal
matrix of $\pm 1's$. Thus, the spectrum of the Seidel adjacency
matrices of switching equivalent graphs are identical. Note that
$[X]$ is a subset of $[[X]]$ for any graph. For the complete graph and empty graph
on $n$ vertices, their switching classes are equal to their switching equivalent classes.
The following examples are similar to Examples 3.6 and 3.7 in
\cite{Sei}.

\begin{exmp}\label{3vertexample} On $3$ vertices there are $4$
non-isomorphic graphs, $2$ distinct switching classes of graphs, and
$2$ distinct switching equivalent classes of graphs. The $4$
non-isomorphic graphs $X_1,$ $X_2$, $X_3,$ and $X_4$ are listed
below.
$$
\begin{array}{cccc}

\xy/r.8pc/:,
{\xypolygon3{~>{}~:{(1.875,0):}{\circ}}},
\endxy
\hspace{1cm}
&

\xy/r.8pc/:,
{\xypolygon3{~>{}~:{(1.875,0):}{\circ}}},
"1";"2"**@{-},
"1";"3"**@{-},
\endxy
\hspace{1cm}

&

\xy/r.8pc/:,
{\xypolygon3{~>{}~:{(1.875,0):}{\circ}}},
"1";"2"**@{-},
\endxy
\hspace{1cm}

&

\xy/r.8pc/:,
{\xypolygon3{~>{}~:{(1.875,0):}{\circ}}},
"1";"2"**@{-},
"1";"3"**@{-},
"2";"3"**@{-},
\endxy \\
X_1 \hspace{1cm} & X_2 \hspace{1cm} & X_3 \hspace{1cm} & X_4 \\
\end{array}
$$
\vspace{.5cm}
Thus $[X_1]=[[X_1]]=[X_2]$ and $[X_3]=[[X_3]]=[X_4]$ but $[X_1]\not=[X_3]$.
\end{exmp}

\begin{exmp}\label{4vertexample} On $4$ vertices there are
$11$ non-isomorphic graphs, $8$ distinct switching classes of graphs, and
$3$ distinct switching equivalent classes of graphs. The $11$
non-isomorphic graphs are $X_1,...,X_6$, listed below, and their
complements $X_6,...,X_{11}$.
$$
\begin{array}{cccccc}
\xy/r.8pc/:,
{\xypolygon4{~>{}~:{(1.875,0):}{\circ}}},
\endxy

\hspace{.5cm}
&

\xy/r.8pc/:,
{\xypolygon4{~>{}~:{(1.875,0):}{\circ}}},
"1";"2"**@{-},
\endxy
\hspace{.5cm}
&

\xy/r.8pc/:,
{\xypolygon4{~>{}~:{(1.875,0):}{\circ}}},
"1";"2"**@{-},
"2";"3"**@{-},
\endxy
\hspace{.5cm}
&

\xy/r.8pc/:,
{\xypolygon4{~>{}~:{(1.875,0):}{\circ}}},
"1";"2"**@{-},
"3";"4"**@{-},
\endxy
\hspace{.5cm}
&

\xy/r.8pc/:,
{\xypolygon4{~>{}~:{(1.875,0):}{\circ}}},
"1";"2"**@{-},
"2";"3"**@{-},
"3";"1"**@{-},
\endxy
\hspace{.5cm}
&

\xy/r.8pc/:,
{\xypolygon4{~>{}~:{(1.875,0):}{\circ}}},
"1";"2"**@{-},
"2";"3"**@{-},
"1";"4"**@{-},
\endxy \\

X_1 \hspace{.5cm}& X_2 \hspace{.5cm}& X_3 \hspace{.5cm}& X_4 \hspace{.5cm}& X_5 \hspace{.5cm}& X_6 \\
\end{array}
$$
The distinct switching classes are $[X_1]$, $[X_2]$, $[X_4]$, and the remaining graphs with one edge. The distinct switching equivalent classes are $[[X_1]]$, $[[X_2]]$, and $[[X_4]]$.
\end{exmp}

Table \ref{table1} provides partial data on the number of non-isomorphic graphs, switching classes, and switching equivalent classes. In \cite{McK}, McKay gives the number of non-isomorphic graphs on $n$ vertices up to $n=12$. Although, as indicated in Table \ref{table1} there is no known formula for the number of switching equivalent classes, in \cite{Sei}, Seidel states that $\displaystyle{2^{\frac{1}{2}n^2-O(n\log n)}}$ is an asymptotic formula for the number of switching equivalent classes on $n$ vertices due to P.M. Neumann through private communication with Seidel. This formula is only slightly better than the number of switching classes on $n$ vertices. The authors constructed representatives of the switching equivalent classes using the software package GAP, \cite{GAP}, but these numbers are already documented in \cite{BMS}.

\begin{table}[ht]
\begin{center}
\begin{tabular}{|c|c|c|c|}  \hline
$n$ & non-isomophic & switching classes & switching equivalent classes \\ \hline
3 & 4  & 2    &  2 \\ \hline
4 & 11 & 8    &  3 \\ \hline
5 & 34   & 64   & 7  \\ \hline
6 & 156   & 1024 & 16 \\ \hline
7  & 1044    &  32,768    & 54 \\ \hline
8  & 12,346   &   $2^{21}$   & 243 \\ \hline
9  & 274,668   &   $2^{28}$   & 2038 \\ \hline
10 & 12,005,168   &   $2^{36}$   & 33,120 \\ \hline
n  &   no known formula & $\displaystyle{ 2^{\frac{(n-1)(n-2)}{2}}}$ & no known formula \\ \hline
\end{tabular}
\caption{Class Sizes}\label{table1}
\end{center}
\end{table}

Using the terminology found in \cite{BMS}, if every vertex of a
given graph has even degree, we call it an Euler graph. The
following results allow us to use Euler graphs as unique, up to
isomorphism, representatives of switching equivalent classes when
the number of vertices is odd.
\begin{thm}\label{EulerSwitch}
If $G$ is a graph with an odd number of vertices, $G$ is switching equivalent to an Euler graph.
\end{thm}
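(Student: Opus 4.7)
The plan is to produce an explicit subset $\tau \subseteq V(G)$ such that $G^\tau$ has all degrees even; since $G^\tau$ then lies in $[[G]]$ by definition of switching equivalence, this suffices. The natural candidate is
\[
\tau := \{\, v \in V(G) : \deg_G(v) \text{ is odd} \,\}.
\]

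First I would track how the degree of a vertex changes under switching on a subset $\tau$. For a vertex $v \in \tau$, the edges inside $\tau$ are preserved and the edges from $v$ to $V(G) \setminus \tau$ are toggled, so if $b_v$ denotes the number of neighbors of $v$ in $V(G) \setminus \tau$, the new degree is
\[
\deg_{G^\tau}(v) \;=\; \deg_G(v) \;-\; 2b_v \;+\; (n - |\tau|).
\]
Similarly, for $v \notin \tau$, with $c_v$ the number of neighbors of $v$ in $\tau$,
\[
\deg_{G^\tau}(v) \;=\; \deg_G(v) \;-\; 2c_v \;+\; |\tau|.
\]
Reducing mod $2$, the parity of $\deg_{G^\tau}(v)$ equals $\deg_G(v) + n - |\tau| \pmod 2$ if $v \in \tau$, and $\deg_G(v) + |\tau| \pmod 2$ if $v \notin \tau$.

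Now I would plug in the specific choice of $\tau$. By the handshake lemma, the number of odd-degree vertices of any graph is even, so $|\tau|$ is even. Because $n$ is odd, $n - |\tau|$ is odd; thus for $v \in \tau$ we have $\deg_G(v)$ odd and $n - |\tau|$ odd, giving an even sum, while for $v \notin \tau$ we have $\deg_G(v)$ even and $|\tau|$ even, again giving an even sum. Hence every vertex of $G^\tau$ has even degree, i.e., $G^\tau$ is an Euler graph switching equivalent (in fact, switching-class equivalent) to $G$.

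There is essentially no hard step here; the only insight that needs to be identified is the choice of $\tau$ (and the alternative choice $V(G) \setminus \tau$, which works by the same parity computation with $|\tau|$ odd). The handshake lemma is what makes the parities line up and is the crucial ingredient that fails for even $n$, where one generally cannot force all degrees even by a single switching.
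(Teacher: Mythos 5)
Your proof is correct and is essentially the paper's argument: the paper switches on the set of even-degree vertices, you switch on the set of odd-degree vertices, but since switching on a set and on its complement toggle exactly the same edges these are the identical operation, and both proofs reduce to the same parity count using the handshake lemma and the oddness of $n$. Your explicit degree formulas just make the paper's parity reasoning more quantitative.
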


\begin{proof}
Let $O$ be the set of odd degree vertices of $G$. Let
$\tau=V(G)\setminus O$, the set of vertices of $G$ with even degree.
Take $v$ to be a vertex in $\tau$ and use $\operatorname{N}(v)$
to denote the vertices adjacent to $v$. Since $|O|$ is even,
$|\operatorname{N}(v)\cap O|$ and $|O\setminus
\operatorname{N}(v)|$ have the same parity. So switching $G$ on
the set $\tau$ preserves the parity of the degree of each vertex in
$\tau$. Similarly, let $u$ be a vertex in $O$. Since $|O|$ is even,
$|V(G)\setminus O|$ must be odd. Therefore
$|\operatorname{N}(u)\cap\tau|$ and $|\tau\setminus
\operatorname{N}(u)|$ have different parities and switching
$G$ on the set $\tau$ changes the parity of each vertex in $O$.
Thus, $G^\tau$ has only vertices of even degree and is an Euler
graph.
\end{proof}

\begin{cor}\label{EulerRep}
When $n$ is odd, every switching equivalence class contains exactly one Euler graph, up to isomorphism.
\end{cor}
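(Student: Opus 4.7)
The plan is to combine Theorem \ref{EulerSwitch} (for existence) with a parity argument (for uniqueness up to isomorphism). Existence is immediate from Theorem \ref{EulerSwitch}: any graph $G$ with $|V(G)|=n$ odd is switching equivalent to an Euler graph, so every switching equivalence class $[[G]]$ contains at least one Euler representative.

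For uniqueness, I would suppose $G_1$ and $G_2$ are two Euler graphs in the same switching equivalence class, so by definition $G_2\cong G_1^\tau$ for some $\tau\subseteq V(G_1)$. Since isomorphism preserves the degree sequence, $G_1^\tau$ is also Euler. The goal then reduces to showing $\tau\in\{\emptyset,V(G_1)\}$, for in either of these cases no edges are flipped, $G_1^\tau=G_1$, and hence $G_2\cong G_1$.

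The core of the uniqueness step is a short parity computation. For $v\in\tau$, switching on $\tau$ flips the adjacencies between $v$ and each of the $n-|\tau|$ vertices of $V(G_1)\setminus\tau$, so $\deg_{G_1^\tau}(v)-\deg_{G_1}(v)\equiv n-|\tau|\pmod 2$; analogously, for $v\notin\tau$ the difference is congruent to $|\tau|$ modulo $2$. Since both $G_1$ and $G_1^\tau$ are Euler, these parity changes must vanish, which forces $n-|\tau|$ to be even whenever $\tau\ne\emptyset$ and $|\tau|$ to be even whenever $\tau\ne V(G_1)$. Because $n$ is odd, these two constraints cannot hold simultaneously, so $\tau=\emptyset$ or $\tau=V(G_1)$, as desired. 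The only subtle point—hardly an obstacle—is noting that switching on $\tau=V(G_1)$ flips no edges (since $V(G_1)\setminus\tau=\emptyset$), so both degenerate choices genuinely return $G_1$ and the argument closes.
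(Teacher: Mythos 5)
Your proof is correct, but it takes a genuinely different route from the paper's. The paper's proof is a counting argument: it invokes Theorem 2.2 of the Bussemaker--Mathon--Seidel tables, which says the number of switching equivalence classes equals the number of isomorphism classes of Euler graphs, and then combines this with Theorem \ref{EulerSwitch} (surjectivity of the ``take an Euler representative'' map between two finite sets of equal size forces injectivity, hence exactly one Euler graph per class). You instead prove uniqueness directly and self-containedly: if $G_1$ and $G_1^\tau$ are both Euler, your parity computation shows $n-|\tau|$ must be even when $\tau\neq\emptyset$ and $|\tau|$ must be even when $\tau\neq V(G_1)$, which is impossible for odd $n$ unless $\tau$ is trivial, and both trivial choices flip no edges. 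The computation is sound (the degree of $v\in\tau$ changes by $(n-|\tau|)-2|\operatorname{N}(v)\setminus\tau|$, and similarly for $v\notin\tau$), and the reduction from ``same switching equivalence class'' to ``$G_2\cong G_1^\tau$'' matches the paper's definition. What your approach buys is independence from the external counting result and, as a free byproduct, a direct proof of Corollary \ref{EulerIso}; what the paper's approach buys is brevity given the cited theorem, plus the counting identity itself, which the paper reuses elsewhere (e.g.\ to enumerate the Euler graphs on four vertices).
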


\begin{proof}
By Theorem 2.2 of \cite{BMS}, there are as many switching equivalent
classes as there are isomorphism classes of Euler graphs. Combining
this with the above theorem gives the desired result.
\end{proof}

\begin{cor}\label{EulerIso}
When $n$ is odd, two Euler graphs are switching equivalent if and only if they are isomorphic.
\end{cor}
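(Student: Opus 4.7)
The plan is to prove both directions separately, with the forward direction being essentially immediate and the reverse direction being a direct application of Corollary \ref{EulerRep}.

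For the forward direction, suppose the two Euler graphs $G_1$ and $G_2$ are isomorphic. Since $G_1 = G_1^{\emptyset}$, the definition of switching equivalence (with $\tau = \emptyset$) gives that $G_1$ and $G_2$ are switching equivalent. This direction needs no use of the hypothesis that $n$ is odd or that the graphs are Euler.

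For the reverse direction, suppose $G_1$ and $G_2$ are switching equivalent Euler graphs on $n$ vertices with $n$ odd. Then $G_1$ and $G_2$ lie in the same switching equivalence class. By Corollary \ref{EulerRep}, this class contains exactly one Euler graph up to isomorphism. Since both $G_1$ and $G_2$ are Euler graphs in this class, they must be isomorphic.

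The main obstacle in this corollary was already handled in the preceding results: Theorem \ref{EulerSwitch} produces an Euler representative in each switching equivalence class, and Corollary \ref{EulerRep} combines this with the counting statement from Theorem 2.2 of \cite{BMS} to establish uniqueness. Once that uniqueness is in hand, the present corollary follows immediately with no further graph-theoretic input.
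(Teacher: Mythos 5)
Your proof is correct and matches the paper's (implicit) reasoning: the paper states this corollary without proof precisely because it is the immediate consequence of Corollary \ref{EulerRep} that you spell out, with the forward direction following from the definition of switching equivalence via $\tau=\emptyset$.
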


While Theorem 2.2 of \cite{BMS} holds for $n$ even as well as odd,
Theorem \ref{EulerSwitch} fails when $n$ is even. The following example demonstrates
this fact.

\begin{exmp}
By Theorem 2.2 of \cite{BMS}, there are three Euler graphs up to isomorphism on four vertices. They are
$$
\begin{array}{ccc}
\xy/r.8pc/:,
{\xypolygon4{~>{}~:{(1.875,0):}{\circ}}},
\endxy

\hspace{.5cm}
&

\xy/r.8pc/:,
{\xypolygon4{~>{}~:{(1.875,0):}{\circ}}},
"1";"2"**@{-},
"3";"2"**@{-},
"3";"4"**@{-},
"1";"4"**@{-},
\endxy
\hspace{.5cm}
&

\xy/r.8pc/:,
{\xypolygon4{~>{}~:{(1.875,0):}{\circ}}},
"1";"2"**@{-},
"1";"3"**@{-},
"2";"3"**@{-},
\endxy
\hspace{.5cm}
\\

E_1 \hspace{.5cm}& E_2 \hspace{.5cm}& E_3 \hspace{.5cm} \\

\end{array}
$$
Switching $E_2$ on a pair of nonadjacent vertices will result in an
empty graph, the same class as $E_1$. Switching $E_3$
on the isolated vertex results in the graph $K_4$; so $E_3$ is in
the class of the complete graph. None of these three graphs are in
the class $[[X_2]]$ from Example \ref{4vertexample}.

\end{exmp}

\section{Infinity Norms}\label{Sec:InfNorms}

Our first attempt to find a characteristic which differentiates switching equivalent classes comes in
the form of a matrix norm similar to that used in \cite{BP}.

\begin{defn} \label{inftynormdefA}
Let $\mathcal{D}_m$ denote the set of diagonal matrices that have
exactly $m$ diagonal entries equal to one and $n-m$ entries equal to
zero.  Given a graph $X$ on $n$ vertices, set
$$
e_m^{\infty}(X):=\mathrm{max}\{ \| D(I+cS)D \|  : D \in \mathcal{D}_m \},
$$
where $S$ is the Seidel adjacency matrix of $X$, $c=\frac{1}{n-1}$, and the norm of the matrix is understood to be the operator norm.
\end{defn}

The infinity norm of a graph $X$, $e_m^{\infty}(X)$, from Definition
\ref{inftynormdefA} is the maximum of a set of $n \choose{m}$
numbers which correspond to the collection of induced subgraphs
on $m$ vertices. Moreover, our choice for the constant $c$ is the
smallest $c$ which guarantees $D(I+cS)D$ is a positive semi-definite
matrix. So $\|D(I+cS)D\|$ is the largest eigenvalue of $D(I+cS)D$.
Lemma \ref{largesteigenvalue} and Proposition  \ref{moo} below
verify this statement.

\begin{lem} \label{largesteigenvalue}
If $S$ is a Seidel adjacency matrix for a graph $X$ on $n$ vertices, then $\|S \|$ is at most $n-1$.
\end{lem}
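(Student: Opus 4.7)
My plan is to bound the spectral radius of $S$ directly, using the fact that $S$ is a symmetric $n\times n$ matrix whose diagonal entries are $0$ and whose off-diagonal entries are all $\pm 1$. Since $S$ is symmetric, the operator norm $\|S\|$ equals the spectral radius, so it suffices to show that every eigenvalue $\lambda$ of $S$ satisfies $|\lambda|\le n-1$.

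The cleanest route is Gershgorin's circle theorem. For each row $i$ of $S$, the diagonal entry is $s_{ii}=0$ and there are exactly $n-1$ off-diagonal entries, each of absolute value $1$. Thus the $i$-th Gershgorin disk is the closed disk centered at $0$ with radius $\sum_{j\ne i} |s_{ij}| = n-1$. Since every eigenvalue of $S$ lies in the union of these disks, every eigenvalue has modulus at most $n-1$, which gives $\|S\|\le n-1$.

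As a sanity check, one could alternatively note that the maximum absolute row sum $\|S\|_\infty$ is $n-1$, and for a symmetric real matrix $\|S\|_2 \le \sqrt{\|S\|_1\|S\|_\infty} = \|S\|_\infty$, yielding the same bound. There is essentially no obstacle here; the only thing to double check is that the lemma is stated as an inequality rather than an equality, since $n-1$ is sharp only for specific two-graphs (for example, the complete graph $K_n$ has $S=J_n-I_n$ whose spectrum is $\{n-1,-1,\dots,-1\}$, attaining the bound; the empty graph gives $S$ with spectrum $\{-(n-1),1,\dots,1\}$, also attaining it). The inequality form is all that is needed, since this lemma will be combined with Proposition \ref{moo} to conclude that $I+\frac{1}{n-1}S$ is positive semi-definite, making $c=\frac{1}{n-1}$ the correct normalizing constant in Definition \ref{inftynormdefA}.
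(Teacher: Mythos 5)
Your proof is correct, but it takes a genuinely different route from the paper's. You invoke Gershgorin's circle theorem: every row of $S$ has a zero diagonal entry and $n-1$ off-diagonal entries of modulus $1$, so every eigenvalue lies in the disk of radius $n-1$ about $0$, and symmetry converts this into the operator-norm bound. The paper instead compares $I_n+S$ with $J_n$ via the Rayleigh quotient: it takes an eigenvector $x$ for the eigenvalue of $I_n+S$ of largest modulus, replaces $x$ by $\overline{x}=(|x_1|,\dots,|x_n|)$, observes that flipping signs can only increase $\langle x,(I_n+S)x\rangle/\|x\|^2$ since the entries of $I_n+S$ are $\pm 1$, and bounds the result by $\|J_n\|=n$, concluding $\|I_n+S\|\le n$ and hence $\|S\|\le n-1$. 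Your argument is more elementary and arguably cleaner: it bounds $|\lambda|$ for every eigenvalue in one stroke, whereas the paper's comparison with $J_n$ most directly controls the largest eigenvalue of $S$ and leaves the reader to supply the symmetric argument for the least one. What the paper's approach buys is the germ of the equality analysis reused in Theorem \ref{equality}, where the same sign-flipping device identifies exactly which induced subgraphs saturate the bound; Gershgorin gives no such structural information. One small slip in your sanity check: with the paper's sign convention ($s_{ij}=-1$ for adjacent pairs), the complete graph has $S=I_n-J_n$ with spectrum $\{-(n-1),1,\dots,1\}$ and the empty graph has $S=J_n-I_n$ with spectrum $\{n-1,-1,\dots,-1\}$, so your two examples are interchanged; this does not affect the proof, and both still show the bound is attained.
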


\begin{proof}
First note that the largest eigenvalue of $J_n$, the matrix of all ones, is $n$. For any vector $x$ in $\mathbb{R}^n$ and any $S$, changing signs to make all their entries positive can only increase
the value of the expression
$$
\frac{\langle x, (I_n+S)x \rangle }{\|x\|^2}.
$$
Since $I_n+S$ is a Hermitian matrix $\|I_n+S\|$ is the maximum of the moduli of the eigenvalues of $I_n+S$. Let $x$ in $\mathbb{R}^n$ be an eigenvector of $I_n+S$ corresponding to the eigenvalue $\lambda$ of largest modulus, and let $\overline{x}=(|x_1|,...,|x_n|)$. It follows that:
\begin{equation*}
\| I_n +  S \|  =  | \lambda |
           =  \frac{ | \langle (I_n+S)x, x \rangle |}{\|x\|^2}
          \leq  \frac{ | \langle J_n\overline{x}, \overline{x} \rangle |}{\|x\|^2}
          \leq  \frac{ \|J_n\overline{x}\| \|x\|}{\|x\|^2}
          \leq n.
\end{equation*}
Hence, $\|S\|$ is at most $n-1$.
\end{proof}

\begin{prop} \label{moo}
Let $\mu_S$ denote the least eigenvalue of a Seidel adjacency
matrix $S$, and let $\mathcal{S}$ denote the set of all Seidel
adjacency matrices on $n$ vertices. Then
\begin{enumerate}
\item $\mu:=\mathrm{inf} \{\mu_S : \ S \in \mathcal{S} \} = 1-n,$
\item $I_n+cS$ is a positive semi-definite operator when $c=\frac{1}{n-1}$.
\end{enumerate}
\end{prop}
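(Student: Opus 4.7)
The plan is to prove the two statements in sequence, using Lemma~\ref{largesteigenvalue} to control the spectrum of $S$ and then exhibiting an explicit Seidel matrix achieving the extremal value.

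For part (1), I would first show $\mu \ge 1-n$ and then produce a graph attaining the bound. Since every $S \in \mathcal{S}$ is real symmetric, its least eigenvalue $\mu_S$ satisfies $\mu_S \ge -\|S\|$. Lemma~\ref{largesteigenvalue} gives $\|S\| \le n-1$, so $\mu_S \ge 1-n$ for every $S \in \mathcal{S}$, hence $\mu \ge 1-n$. To see that this is sharp, I would compute the Seidel matrix of the complete graph $K_n$: since $A(K_n) = J_n - I_n$, the definition gives
\[
S(K_n) = J_n - I_n - 2(J_n - I_n) = I_n - J_n.
\]
The all-ones matrix $J_n$ has eigenvalues $n$ (once) and $0$ (with multiplicity $n-1$), so $I_n - J_n$ has eigenvalues $1-n$ (once) and $1$ (with multiplicity $n-1$). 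In particular $\mu_{S(K_n)} = 1-n$, which together with the lower bound yields $\mu = 1-n$.

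For part (2), I would argue at the level of eigenvalues. The matrix $S$ is symmetric, so $I_n + cS$ is symmetric and its eigenvalues are exactly $1 + c\lambda$ as $\lambda$ ranges over the spectrum of $S$. By part (1), every such $\lambda$ satisfies $\lambda \ge 1-n$, so for $c = \frac{1}{n-1}$,
\[
1 + c\lambda \;\ge\; 1 + \frac{1-n}{n-1} \;=\; 0.
\]
Thus every eigenvalue of $I_n + cS$ is nonnegative, and $I_n + cS$ is positive semi-definite.

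I do not expect any real obstacle here: part (1) reduces to combining Lemma~\ref{largesteigenvalue} (already proved) with the observation that $K_n$ realizes the extremal case, and part (2) is then an immediate eigenvalue shift. The only content to get right is the sign computation for $S(K_n)$ and the recognition that $\frac{1}{n-1}$ is precisely the reciprocal of the bound in Lemma~\ref{largesteigenvalue}, which is what makes $c = \frac{1}{n-1}$ the smallest constant for which positivity is guaranteed for every $S \in \mathcal{S}$.
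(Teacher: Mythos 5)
Your proposal is correct and follows essentially the same route as the paper: the lower bound $\mu \ge 1-n$ from Lemma~\ref{largesteigenvalue}, the extremal example $S = I_n - J_n$ (which the paper also uses, and which you correctly identify as $S(K_n)$), and the eigenvalue-shift argument for positive semi-definiteness. No gaps.
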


\begin{proof} By Lemma \ref{largesteigenvalue}, $1-n \leq \mu$. However, $-n$ is the least eigenvalue of $-J_n$ and consequently the Seidel adjacency matrix
$S=I_n-J_n$ has $-n+1$ for a least eigenvalue. Therefore $\mu=-n+1$.

Let $\sigma(S)$ denote the spectrum of $S$. Then
\begin{align*}
\sigma(S) \subseteq [-n+1,n-1] & \Longleftrightarrow \sigma\left( cS \right) \subseteq [-1,1] \\
                               & \Longleftrightarrow \sigma\left(I_n + cS \right) \subseteq [0,2].
\end{align*}
Thus, $I_n+cS$ is a positive semi-definite operator.
\end{proof}

\begin{thm} \label{equality}
Let $X$ be a graph on $n$ vertices and $S$ be the associated Seidel
adjacency matrix.  Then $\displaystyle{e_m^{\infty}(X) \leq 1+\frac{m-1}{n-1}}$.  Furthermore,
$\displaystyle{e_m^{\infty}(X)=1+\frac{m-1}{n-1}}$ if and only if $X$ has an induced subgraph on
$m$ vertices which is complete bipartite or empty.
\end{thm}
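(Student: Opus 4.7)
The plan is to reduce the whole question to the spectral behavior of Seidel matrices of induced subgraphs and then carefully trace the equality case in Lemma \ref{largesteigenvalue}.

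First I would observe that for $D\in\mathcal{D}_m$ with support $\tau\subseteq V(X)$ of size $m$, the operator $D(I+cS)D$ vanishes on the complementary subspace and agrees with the principal submatrix $I_m+cS_\tau$ on the coordinate subspace indexed by $\tau$, where $S_\tau$ is the Seidel matrix of the induced subgraph $X[\tau]$. Hence $\|D(I+cS)D\|=\|I_m+cS_\tau\|$. Applying Lemma \ref{largesteigenvalue} to the induced subgraph on $m$ vertices gives $\|S_\tau\|\le m-1$, so $cS_\tau$ has spectrum in $[-(m-1)/(n-1),(m-1)/(n-1)]\subseteq[-1,1]$; in particular $I_m+cS_\tau$ is positive semi-definite, and
\[
\|I_m+cS_\tau\|=\lambda_{\max}(I_m+cS_\tau)=1+c\,\lambda_{\max}(S_\tau)\le 1+\frac{m-1}{n-1}.
\]
Maximizing over $\tau$ yields the desired inequality.

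For the equality characterization, I would examine when $\lambda_{\max}(S_\tau)=m-1$, i.e.\ when the bound in Lemma \ref{largesteigenvalue} is tight for the $m$-vertex Seidel matrix $S_\tau$. Revisiting the inequality chain in that lemma, equality forces (i) the eigenvector $x$ for the extremal eigenvalue of $I_m+S_\tau$ to satisfy $|x_i|$ constant across $i\in\tau$ (so that $\overline{x}$ is an eigenvector of $J_m$ with eigenvalue $m$), and (ii) all terms $(I_m+S_\tau)_{ij}x_ix_j$ to share a common sign. The diagonal terms equal $+x_i^2>0$, so the common sign is $+1$, and thus $I_m+S_\tau=xx^{T}$ with $x\in\{\pm1\}^{\tau}$. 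Partitioning $\tau$ into $A=\{i:x_i=+1\}$ and $B=\{i:x_i=-1\}$, the formula $S_{ij}=x_ix_j$ for $i\neq j$ means $S_\tau$ is $+1$ within $A$ and within $B$ and $-1$ across. Translating via $S=J-I-2A(X)$, this says that the edges of $X[\tau]$ are precisely the pairs between $A$ and $B$: $X[\tau]$ is $K_{|A|,|B|}$, which is complete bipartite when both parts are nonempty and empty when one part is empty.

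For the converse I would simply exhibit the eigenvector. If $X[\tau]$ is empty, then $S_\tau=J_m-I_m$ and the all-ones vector is an eigenvector with eigenvalue $m-1$. If $X[\tau]=K_{a,b}$ with parts $A,B$, take $x_i=+1$ for $i\in A$ and $x_i=-1$ for $i\in B$; a direct row-sum check shows $S_\tau x=(m-1)x$, so $\lambda_{\max}(S_\tau)=m-1$, and therefore $e_m^{\infty}(X)=1+(m-1)/(n-1)$.

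The only real obstacle is the forward equality direction: one must squeeze equality out of both the triangle inequality step and the Cauchy--Schwarz/$J_n$-eigenvalue step in Lemma \ref{largesteigenvalue} simultaneously, and then interpret the rank-one conclusion $I_m+S_\tau=xx^{T}$ combinatorially. Everything else is either a direct reduction to the earlier lemma or a short calculation with the canonical $\pm1$ eigenvector.
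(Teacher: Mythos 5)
Your proposal is correct and follows essentially the same route as the paper: reduce $\|D(I+cS)D\|$ to the principal submatrix $I_m+cS_\tau$, apply Lemma \ref{largesteigenvalue} to the induced subgraph to get the bound, and characterize equality through the sign pattern of the top eigenvector. Your handling of the equality case --- extracting the constancy of $|x_i|$ and the rank-one identity $I_m+S_\tau=xx^{T}$ from the saturated inequalities, rather than the paper's switch-to-positive-eigenvector-and-perturb contradiction --- is a more explicit rendering of the same idea, and your direct eigenvector verification of the converse replaces the paper's observation that the subgraph is switching equivalent to the empty graph.
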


Theorem \ref{equality} is a generalization of Bodman and Paulsen's
Theorem 5.3 in \cite{BP}. While our proof is similar, we include it
to provide insight into the relationship between a graph $X$
and the value $e_m^{\infty}(X)$.

\begin{proof} By Lemma \ref{largesteigenvalue} and the triangle inequality the claimed error bound follows:

$$
e_m^{\infty}(X)  \leq  1 + \frac{m-1}{n-1}.
$$

Assume that the graph $X$ has an induced subgraph on $m$ vertices which is complete bipartite or empty. Choose $D$ to
have ones in the places on the diagonal corresponding to the vertices of this subgraph and zeros elsewhere. Then, $D(I_n+Q)D$ is switching equivalent to
$DJ_nD$. Since switching preserves the operator norm, $\| I_m+Q_m\|=m$ implying that $\| Q_m \|=m-1$. Therefore
$$
e_m^{\infty}(X)=\mathrm{max}\{ \| D(I_n+cS)D \|  : D \in \mathcal{D}_m \}=1+\frac{m-1}{n-1}.
$$

Now assume that $e_m^{\infty}(X)=1+\frac{m-1}{n-1}$ or equivalently $\| Q_m \|=m-1$.
Then, for some $D$, $\|D(I_n+Q)D\|=m$.
 Let $x$ be an eigenvector corresponding to the eigenvalue $\pm m$.
Choose a switching matrix $S$ such that all of the entries of $Sx$
 are positive, i.e., $S$ should have $-1's$ on the diagonal in the places where
the entries of $x$ are negative and $1's$ on the other diagonal
entries.
 Using reasoning similar to the proof of Lemma 3.8, all of the entries
of $S(I_n+Q)S$ must be $1's$ in the rows and columns where $D$ has
$1's$ on the diagonal. Otherwise, since $Sx$ has all positive
entries and is the eigenvector corresponding to the largest
eigenvalue of $SD(I_n+Q)DS$, it would be possible to increase the
largest eigenvalue of $SD(I_n+Q)DS=I_m+Q_m^{\prime}$ by flipping signs in $Q_m^{\prime}$, contradicting that the inequality is saturated. Hence, the induced subgraph on these vertices is switching equivalent to the edgeless graph, i.e., this induced subgraph is complete bipartite.
\end{proof}

\begin{cor}\label{infnormcomplete}
Let $X$ be a graph on $n$ vertices.  Then $e_3^{\infty}(X) < 1+ \frac{2}{n-1}$ if
and only if $X$ is switching equivalent to the complete graph,
denoted by $K_n$.
\end{cor}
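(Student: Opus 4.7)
The plan is to reduce the claim to a statement about 3-vertex induced subgraphs of $X$ and then extract the global structure via a single switching move. The starting point is Theorem \ref{equality} specialized to $m=3$: the strict inequality $e_3^{\infty}(X) < 1 + \frac{2}{n-1}$ holds exactly when $X$ has no induced 3-vertex subgraph that is complete bipartite or empty. On $3$ vertices these forbidden subgraphs are $K_{1,2}$ and $\overline{K_3}$, and by Example \ref{3vertexample} they constitute precisely one of the two switching equivalent classes of 3-vertex graphs. Hence the strict inequality is equivalent to every induced 3-vertex subgraph of $X$ lying in $[[K_3]]$, i.e., having an odd number of edges.

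For the easy direction, I would note that switching on $\tau \subseteq V(X)$ restricts to switching on $\tau \cap W$ on any induced subgraph $X[W]$, and isomorphisms restrict to isomorphisms of induced subgraphs on corresponding vertex sets. So if $X \in [[K_n]]$, every induced 3-vertex subgraph of $X$ is switching equivalent to some induced 3-vertex subgraph of $K_n$, which is $K_3$; in particular, it lies in $[[K_3]]$.

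For the converse, suppose every 3-vertex induced subgraph of $X$ lies in $[[K_3]]$. Fix a vertex $v$, let $\tau \subseteq V(X) \setminus \{v\}$ be the set of non-neighbors of $v$ in $X$, and set $Y = X^{\tau}$. In $Y$ the vertex $v$ is adjacent to every other vertex: the non-edges between $v$ and $\tau$ are flipped into edges, while the edges from $v$ to its original neighbors (all in $V(X) \setminus (\tau \cup \{v\})$) are untouched since both endpoints avoid $\tau$. The hypothesis is preserved under switching, so every 3-vertex induced subgraph of $Y$ still lies in $[[K_3]]$. Now for any distinct $u, w \in V(Y) \setminus \{v\}$, the induced subgraph on $\{u,v,w\}$ already contains the edges $vu$ and $vw$, so its edge count is either $2$ or $3$; only the odd value is compatible with the hypothesis, forcing $uw$ to be an edge. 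Thus $Y = K_n$, and $X \in [K_n] \subseteq [[K_n]]$.

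The main obstacle is the converse direction: promoting a purely local condition on triples to the global identification $Y = K_n$. The switching-at-$v$ trick is what makes it work, since after normalizing $v$ to be universally adjacent, the hypothesis on triples reduces to a one-line parity argument on pairs $\{u,w\}$; without this normalization, a direct triple-by-triple analysis would be substantially messier.
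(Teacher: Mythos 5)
Your proposal is correct. The paper states this corollary without proof, as an immediate consequence of Theorem \ref{equality} with $m=3$; your argument supplies exactly the intended reduction (strict inequality $\Leftrightarrow$ every induced triple has odd edge count, the switching-class invariant from Example \ref{3vertexample}) together with a clean local-to-global step, switching at a vertex to make it universal and then reading off the remaining edges by parity.
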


\begin{cor}\label{infnormempty}
Let $X$ be a graph on $n$ vertices.  Then $e_n^{\infty}(X)=2$ if and
only $X$ is switching equivalent to the empty graph, denoted by
$E_n$.
\end{cor}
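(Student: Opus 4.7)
The plan is to specialize Theorem \ref{equality} to the case $m = n$ and then translate the resulting graph-theoretic condition on $X$ into a statement about switching.

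First I would observe that $\mathcal{D}_n$ contains only the identity matrix, since the requirement of $n$ diagonal entries equal to $1$ in an $n \times n$ diagonal matrix forces $D = I_n$. Consequently
$$
e_n^{\infty}(X) = \| I_n + cS \|,
$$
with $c = 1/(n-1)$. Setting $m = n$ in Theorem \ref{equality} immediately yields $e_n^{\infty}(X) \leq 1 + \frac{n-1}{n-1} = 2$, and equality holds if and only if $X$ contains an induced subgraph on $n$ vertices which is complete bipartite or empty. But the only induced subgraph on all $n$ vertices is $X$ itself, so the equality condition collapses to the statement that $X$ is complete bipartite or empty.

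The remaining step is to identify this class of graphs with $[[E_n]]$. On the one hand, if $X = K_{a,b}$ with bipartition $A \sqcup B$, then switching on $A$ destroys every edge between $A$ and $B$ and creates no new edges (there are no edges inside $A$ or inside $B$), producing the empty graph; hence $X$ lies in the switching class $[E_n]$. On the other hand, switching $E_n$ on any $\tau \subseteq V(E_n)$ promotes every non-edge between $\tau$ and its complement to an edge and leaves all other pairs unchanged, so every graph in $[E_n]$ is either complete bipartite (when $\tau \notin \{\emptyset, V\}$) or empty. Combining this with the observation already recorded in Section \ref{Sec:Switching} that $[E_n] = [[E_n]]$ closes the loop. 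There is no real obstacle here; the corollary is a direct specialization of Theorem \ref{equality} together with the elementary description of the switching class of the empty graph.
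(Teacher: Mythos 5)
Your proof is correct and follows the route the paper clearly intends: Corollary \ref{infnormempty} is stated without proof as an immediate specialization of Theorem \ref{equality} to $m=n$, combined with the observation that the switching class of $E_n$ consists exactly of the complete bipartite graphs together with $E_n$ itself and that $[E_n]=[[E_n]]$. Your write-up simply makes these steps explicit.
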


The authors used Maple 11, \cite{Map}, to compute infinity norms for arbitrary graphs. These computations were used for the following theorem and example.

\begin{thm} \label{inftyequivalent}
Let $X_1$ and $X_2$ be graphs on $n$ vertices with $n \leq 5$. Then
$X_1$ and $X_2$ are switching equivalent if and only if
$e_m^{\infty}(X_1)=e_m^{\infty}(X_2)$ for $3 \leq m \leq n$.
\end{thm}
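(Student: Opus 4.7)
The plan is to prove each direction separately. The forward implication is an invariance argument. If $S(X_2) = Q^T S(X_1) Q$ for a signed permutation matrix $Q = PE$, it suffices to handle the switching and isomorphism parts separately. For switching, $S(X_2) = E S(X_1) E$ gives $D(I + cS(X_2))D = E \bigl(D(I + cS(X_1))D\bigr) E$ since $D$ and $E$ are commuting diagonals and $E^2 = I$, so the operator norms agree for each $D \in \mathcal{D}_m$. For isomorphism, $S(X_2) = P^T S(X_1) P$ gives $\|D(I + cS(X_2))D\| = \|(PDP^T)(I + cS(X_1))(PDP^T)\|$, and since $D \mapsto PDP^T$ is a bijection on $\mathcal{D}_m$, the two maxima coincide.

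The reverse implication reduces to a finite computation. By Table \ref{table1} there are only $2$, $3$, and $7$ switching equivalent classes for $n = 3, 4, 5$, respectively. I would select a representative from each class (for the odd cases $n = 3, 5$ the Euler graph representatives guaranteed by Corollary \ref{EulerRep} are convenient), compute the tuple $\bigl(e_3^\infty(X), \ldots, e_n^\infty(X)\bigr)$ for each representative, and verify that the resulting tuples are pairwise distinct. For $n = 3, 4$ the check is immediate by hand, and for $n = 5$ it is the kind of computation the authors perform in Maple.

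Several results from earlier in the paper should cut the labor substantially. Corollary \ref{infnormcomplete} and Corollary \ref{infnormempty} already single out the two extreme classes $[[K_n]]$ and $[[E_n]]$ from one coordinate of the tuple. More usefully, Theorem \ref{equality} shows that the individual values $\|D(I+cS)D\|$ split into two types: those equal to the saturating value $1 + \tfrac{m-1}{n-1}$ (achieved exactly on induced complete bipartite or empty subgraphs on $m$ vertices) and those strictly smaller. So one can first compare, for each $m$, whether $e_m^\infty(X)$ saturates, using combinatorial subgraph counts, and only then compare exact non-saturated values. This also makes it natural to enumerate the classes by the pattern of induced complete bipartite and empty subgraphs they contain.

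The main obstacle is simply the $n = 5$ case: it is not a structural argument but rather a direct verification that seven particular triples $(e_3^\infty, e_4^\infty, e_5^\infty)$ are distinct. Each entry is the maximum of at most $\binom{5}{m}$ operator norms of symmetric $m \times m$ matrices with entries in $\{0, \pm c\}$, so the eigenvalue problems are cubic, quartic, and quintic, all tractable in exact arithmetic. The absence of a conceptual reason guaranteeing distinctness is precisely why the hypothesis is restricted to $n \leq 5$, and why the paper subsequently motivates the stronger $1$-norm and the conjecture in Section \ref{Sec:OneNorms}.
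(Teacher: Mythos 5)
Your proposal is correct and follows essentially the same route as the paper: the reverse direction is handled by using Corollaries \ref{infnormcomplete} and \ref{infnormempty} to isolate the classes of $K_n$ and $E_n$ (which settles $n=3,4$ outright) and then by explicit computation of the infinity-norm tuples for the remaining five classes when $n=5$, exactly as in the paper's Table \ref{inftyequivtable}. The only difference is that you spell out the forward direction (invariance of $e_m^\infty$ under conjugation by signed permutation matrices, using that diagonal switching matrices commute with $D$ and that $D\mapsto PDP^T$ permutes $\mathcal{D}_m$), which the paper leaves implicit from its Section \ref{Sec:Switching} discussion; this is a worthwhile detail but not a different argument.
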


\begin{proof}
The cases of $n=1$ and $n=2$ are clear since there is only one switching equivalence class. For $n=3$, recall from Example \ref{3vertexample} that there are exactly two switching equivalent classes. Corollaries \ref{infnormcomplete} and \ref{infnormempty} give that $e_3^{\infty}$ has different values for these two classes.

In the case $n=4$, Example \ref{4vertexample} shows that there are
exactly $3$ switching equivalent classes. Again, using Corollaries
\ref{infnormcomplete} and \ref{infnormempty}, the graphs switching
equivalent to the complete graph and the graph with no edges are
identified. The other graphs are all switching equivalent and are in
the remaining class.

When $n=5$, Corollaries \ref{infnormcomplete} and \ref{infnormempty}
are not enough to identify all of the classes. We still use these
corollaries to distinguish the classes of the complete graph and the
graph with no edges. By explicit computation, the remaining graphs
form five switching equivalent classes. The representatives and
infinity norm values for these classes are given in Table
\ref{inftyequivtable}.

\begin{table}[ht]
\begin{center}
\renewcommand{\arraystretch}{1.5}
\begin{tabular}{|c|c|c|c|c|c|}  \hline
&&&&&\\
Representative &
$$
\xy/r.8pc/:,
{\xypolygon5{~>{}~:{(1.875,0):}{\circ}}},
"1";"2"**@{-},
\endxy
$$
&
$$
\xy/r.8pc/:,
{\xypolygon5{~>{}~:{(1.875,0):}{\circ}}},
"1";"2"**@{-},
"2";"3"**@{-},
\endxy
$$
&
$$
\xy/r.8pc/:,
{\xypolygon5{~>{}~:{(1.875,0):}{\circ}}},
"1";"2"**@{-},
"3";"4"**@{-},
\endxy
$$
&
$$
\xy/r.8pc/:,
{\xypolygon5{~>{}~:{(1.875,0):}{\circ}}},
"1";"2"**@{-},
"2";"3"**@{-},
"3";"4"**@{-},
\endxy
$$
&
$$
\xy/r.8pc/:,
{\xypolygon5{~>{}~:{(1.875,0):}{\circ}}},
"1";"2"**@{-},
"2";"3"**@{-},
"3";"1"**@{-},
\endxy
$$
\\ &&&&& \\ \hline\hline
$e_3^{\infty}(X)$ & $\frac{3}{2}$ & $\frac{3}{2}$ & $\frac{3}{2}$ & $\frac{3}{2}$ & $\frac{3}{2}$ \\ \hline
$e_4^{\infty}(X)$ & $\frac{7}{4}$ & $\frac{7}{4}$ & $1+\frac{\sqrt{5}}{4}$ & $1+\frac{\sqrt{5}}{4}$ & $1+\frac{\sqrt{5}}{4}$ \\ \hline
$e_5^{\infty}(X)$ & $\frac{9}{8}+\frac{\sqrt{33}}{8}$ & $\frac{7}{4}$ & $\frac{9}{8}+\frac{\sqrt{17}}{8}$ & $1+\frac{\sqrt{5}}{4}$ & $\frac{7}{8}+\frac{\sqrt{33}}{8}$ \\ \hline
\end{tabular}
\caption{Infinity norms for graphs on $5$ vertices} \label{inftyequivtable}
\end{center}
\end{table}

\end{proof}

\begin{exmp}\label{Ex:inf6}
This example shows that Theorem \ref{inftyequivalent} fails for $n \geq 6$. The following graphs are not switching equivalent.

$$
\begin{array}{cc}
\xy/r.8pc/:,
{\xypolygon6{~>{}~:{(1.875,0):}{\circ}}},
"1";"2"**@{-},
"1";"3"**@{-},
"2";"4"**@{-},
"4";"3"**@{-},
\endxy

\hspace{2cm}
&

\xy/r.8pc/:,
{\xypolygon6{~>{}~:{(1.875,0):}{\circ}}},
"1";"2"**@{-},
"1";"3"**@{-},
"2";"4"**@{-},
"4";"5"**@{-},
\endxy
 \\

X_1 \hspace{2cm} & X_2\\
\end{array}
$$
Clearly, $X_1$ is not isomorphic $X_2$, and switching $X_1$ on any subset $\tau$ of $V(X_1)$ will not produce a graph isomorphic to $X_2$. Consequently, these graphs are not switching equivalent. Table \ref{sixsix} shows that $e_m^{\infty}$ does not distinguish the classes of these graphs.

\begin{table}[h]\label{sixsix}
\begin{center}
\renewcommand{\arraystretch}{1.25}
\begin{tabular}{|c|c|c|} \hline
$m$ & $e_m^{\infty}(X_1)$ & $e_m^{\infty}(X_2)$    \\ \hline
3   & $ 1.\bar{4}$  &  $ 1.\bar{4}$  \\ \hline
4   & $ 1.\bar{6}$  &  $ 1.\bar{6}$  \\ \hline
5   & $ 1.\bar{6}$  &  $ 1.\bar{6}$  \\ \hline
6   & $ 1.\bar{6}$  &  $ 1.\bar{6}$  \\ \hline
\end{tabular}
\caption{counter-example} \label{counter-example}
\end{center}
\end{table}
\end{exmp}

\section{Spectrum Determined Switching Equivalent Classes}\label{Sec:DS}

As noted in Section 2, switching equivalent graphs have the same
Seidel spectrum. A natural question is whether the switching
equivalent class of a graph is determined uniquely by its Seidel
spectrum. In \cite{vDH}, van Dam and Haemers survey the known
results and open questions for graphs determined by their spectrum.
While graphs are not determined by their Seidel spectrum, the
analogous question about switching equivalent classes is not so
obvious. The following result has been verified by direct
computation.

\begin{thm}
For $n\leq 7$, the graphs $G$ and $H$ on $n$ vertices are switching
equivalent if and only if their Seidel matrices have the same
spectrum.
\end{thm}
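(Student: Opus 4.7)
The forward direction — switching equivalent graphs share a Seidel spectrum — is already recorded in Section \ref{Sec:Switching} as an immediate consequence of the fact that switching equivalence corresponds to conjugation by a signed permutation matrix, so it requires no further argument here. The entire content of the theorem lies in the converse: that for $n \leq 7$, no two distinct switching equivalent classes happen to share a Seidel spectrum. My plan is to handle this direction by exhaustive enumeration, one value of $n$ at a time.

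The first step is to obtain a complete list of representatives for the switching equivalent classes on $n$ vertices for each $n$ from $1$ to $7$. For the odd cases $n = 3, 5, 7$, Corollary \ref{EulerRep} is the natural tool: every class contains a unique (up to isomorphism) Euler graph, so we can enumerate the non-isomorphic Euler graphs on $n$ vertices and take these as canonical representatives. For the even cases $n = 4$ and $n = 6$, no such clean representative scheme exists (cf.\ the example following Corollary \ref{EulerIso}), so we instead extract representatives directly from the list of non-isomorphic graphs on $n$ vertices, grouping them into switching equivalent classes by running the action of signed permutations on Seidel matrices. The paper already indicates that the authors have done this enumeration in GAP, matching the counts $2, 3, 7, 16, 54$ of Table \ref{table1}.

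The second step is to compute the Seidel spectrum of each chosen representative. Since $|V| \leq 7$, each $S(X)$ is a $7 \times 7$ or smaller symmetric integer matrix with entries in $\{-1, 0, 1\}$ and a characteristic polynomial that can be handed to any computer algebra system. The third step is the verification itself: scan the resulting list of spectra and check that no two of the $2 + 3 + 7 + 16 + 54 = 82$ spectra coincide.

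The only real obstacle is the bookkeeping at $n = 7$, where $54$ classes must all be shown to be pairwise distinguished by the Seidel spectrum; this is purely a finite computation, which the theorem statement flags by asserting ``verified by direct computation.'' A mild subtlety worth noting is that cospectrality must be checked as equality of multisets of eigenvalues, not merely equality of characteristic polynomials factored over $\mathbb{Z}$, since Seidel eigenvalues are typically irrational; in practice one compares characteristic polynomials in $\mathbb{Z}[x]$, which is equivalent and avoids floating-point issues. Once the tabulated polynomials are seen to be $82$ distinct elements of $\mathbb{Z}[x]$, the theorem follows.
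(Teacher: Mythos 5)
Your proposal is correct and matches the paper's approach: the paper offers no written argument beyond the remark that the result ``has been verified by direct computation,'' and your plan --- enumerate class representatives (Euler graphs for odd $n$, direct orbit computation for even $n$), compute Seidel characteristic polynomials, and check pairwise distinctness within each $n$ --- is exactly that computation, with the forward direction already supplied by the signed-permutation conjugation noted in Section \ref{Sec:Switching}.
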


\begin{exmp}\label{Ex:SeiSpec}
No pair of the following three graphs on eight vertices are
switching equivalent, yet their Seidel matrices all have the same
spectrum.

$$
\begin{array}{ccc}
\xy/r1.5pc/:,
{\xypolygon8{~>{}~:{(1.875,0):}{\circ}}},
"1";"2"**@{-},
"1";"4"**@{-},
"1";"6"**@{-},
"2";"3"**@{-},
"2";"4"**@{-},
"4";"5"**@{-},
"6";"7"**@{-},
"6";"8"**@{-},
\endxy

\hspace{1cm}
&

\xy/r1.5pc/:,
{\xypolygon8{~>{}~:{(1.875,0):}{\circ}}},
"1";"2"**@{-},
"1";"4"**@{-},
"1";"8"**@{-},
"2";"4"**@{-},
"4";"5"**@{-},
"5";"6"**@{-},
"6";"7"**@{-},
"6";"8"**@{-},
\endxy
\hspace{1cm}
&

\xy/r1.5pc/:,
{\xypolygon8{~>{}~:{(1.875,0):}{\circ}}},
"1";"2"**@{-},
"1";"5"**@{-},
"1";"7"**@{-},
"3";"4"**@{-},
"5";"6"**@{-},
"5";"7"**@{-},
"7";"8"**@{-},
\endxy
 \\

Y_1 \hspace{1cm} & Y_2 \hspace{1cm} & Y_3 \\
\end{array}
$$
\end{exmp}

In addition to Example \ref{Ex:SeiSpec}, there are six other pairs
of switching equivalent classes on eight vertices, each of which
have the same Seidel spectrum. These examples are the smallest
number of vertices where this occurs. Seidel and others have
evidence of larger examples where nonswitching equivalent graphs
have the same spectrum.

\begin{exmp}\label{26example}
In \cite{BMS}, the authors make reference to four nonequivalent
switching classes on $26$ vertices. This example is expanded in
Section \ref{Sec:26}. The authors of \cite{BP} state that these four
classes have Seidel matrices which are conference matrices, forcing
them to have eigenvalues $\pm 5$, each with multiplicity $13$. This
implies that they are Seidel cospectral.
\end{exmp}

\section{Decks}\label{Sec:Decks}

The notion of the deck of a graph is a commonly used tool for
attempting to determine certain invariants of a graph from its
collection of unlabeled induced subgraphs see \cite{LS}.  While the
deck reconstruction problem has not been solved for graphs in
general, our work suggests that its analogue for switching equivalent classes
will hold.

\begin{defn}
A {\bf vertex-deleted subgraph} of a graph $G$ is a subgraph $G_v$
obtained by deleting a vertex $v$ and its incident edges. The {\bf
deck} of a graph $G$, denoted $\mathcal{D}(G)$, is the family of
unlabeled vertex-deleted subgraphs of $G$; these are called the cards of
the deck.
\end{defn}

\begin{defn}
Let $G$ and $H$ be graphs on the same number of vertices. We say
their decks are isomorphic, denoted $\mathcal{D}(G) \cong
\mathcal{D}(H)$, if there exists a bijection $\pi:\mathcal{D}(G)\to
\mathcal{D}(H)$ such that $\pi(x) \cong x$.  In this case, we call
$H$ a {\bf reconstruction} of $G$. Similarly, we define the notion
of switching equivalent decks, denoted by $\mathcal{D}(G) \sim
\mathcal{D}(H)$, in which case the bijection $\pi$ satisfies $\pi(x)
\in [[x]]$.
\end{defn}

If every reconstruction of $G$ is isomorphic to $G$, we say $G$ is
reconstructible. The reconstruction conjecture as stated in
\cite{LS} is:

\begin{conj}\label{Recon}
Every graph with at least three vertices is reconstructible.
\end{conj}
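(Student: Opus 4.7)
The plan is first to recognize that Conjecture \ref{Recon} is the classical Kelly-Ulam reconstruction conjecture, open since 1941. Since no complete proof is known, I would not attempt one here; instead, I would describe the standard Kelly-style attack, record what it does settle, and make explicit where it stalls. This matches the role of the conjecture in the paper, which is to motivate the switching-class analogue rather than to assert a new theorem.

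First, I would prove Kelly's Lemma: for any graph $F$ with $|V(F)| < n$, the number of subgraphs of $G$ isomorphic to $F$ is determined by $\mathcal{D}(G)$. The argument is a double count of pairs $(H,v)$ with $H \subseteq G_v$ and $H \cong F$; each copy of $F$ in $G$ is seen in exactly $n - |V(F)|$ cards, so the recursion unwinds. From Kelly's Lemma I would then reconstruct the usual battery of invariants, namely the number of edges, the degree sequence, the number of connected components together with their isomorphism types, the characteristic polynomial, and so on. Combining these, one obtains reconstructibility for several distinguished families: disconnected graphs, trees (Kelly, 1957), regular graphs, maximal planar graphs, and graphs with sufficiently many cards of a specified type.

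The hard part, and indeed the reason Conjecture \ref{Recon} has resisted attack for more than eighty years, will be passing from this accumulation of reconstructible parameters to a proof that the full deck determines $G$ up to isomorphism. No single invariant has been shown sufficient, and no general inductive or algebraic reduction has been found; the usual obstacle is that two hypothetical nonreconstructible graphs must agree on every combinatorial quantity computable by Kelly's Lemma, which eliminates most natural distinguishing tools. I would therefore not expect the outline above to close the gap. Consistent with the authors' framing, the purpose of stating Conjecture \ref{Recon} is to set up the analogous switching-equivalent-deck conjecture, which the remainder of the paper investigates in a setting where the relevant bijections on cards are weakened from isomorphism to switching equivalence.
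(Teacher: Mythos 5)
You are right that this is the classical Kelly--Ulam reconstruction conjecture, which the paper states (following \cite{LS}) purely as motivation for Conjecture \ref{SERecon} and does not prove; the paper contains no argument for it, so your decision to decline a proof and instead summarize the standard Kelly's-Lemma partial results is exactly the correct reading of the statement's role. Nothing further is required.
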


We call the switching equivalent class $[[H]]$ reconstructible if
$\mathcal{D}(G) \sim \mathcal{D}(H)$ implies that $G\in [[H]]$. This
leads to our switching equivalent reconstruction conjecture.

\begin{conj}\label{SERecon}
Every switching equivalent class on at least 4 vertices is reconstructible.
\end{conj}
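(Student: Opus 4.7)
The plan is to combine a reduction to canonical representatives with invariants recovered directly from the switching-deck. When $n$ is odd, Corollary \ref{EulerRep} provides a unique Euler graph representative in each switching equivalent class, so one can reformulate Conjecture \ref{SERecon} as the statement that the switching-deck of an Euler graph determines it up to isomorphism; this converts a switching-reconstruction problem into an ordinary reconstruction problem for a restricted class of graphs. For $n$ even no such canonical choice exists, and one would need either a replacement normal form or a lifting trick (e.g.\ adjoining an isolated vertex and reducing to the odd case), though one must check that such a lift respects $\sim$-deck equivalence, which is not automatic.

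Next I would try to recover the Seidel characteristic polynomial $\phi_{S(G)}(x)$ directly from the switching-deck. Each card has a well-defined Seidel spectrum because switching equivalence preserves Seidel spectra, so the $\sim$-deck determines $\phi_{S(G_v)}(x)$ for every $v$. Since $S(G_v)$ is the principal submatrix of $S(G)$ obtained by deleting the $v$-th row and column, the standard identity
\begin{equation*}
\phi'_{S(G)}(x) \;=\; \sum_{v \in V(G)} \phi_{S(G_v)}(x)
\end{equation*}
recovers $\phi_{S(G)}(x)$ up to its constant term, and additional invariants (for instance $\det S(G)$ read off through a different channel) would then pin down the full Seidel spectrum of $G$.

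Since Example \ref{Ex:SeiSpec} shows that the Seidel spectrum alone is not a complete invariant of $[[G]]$, I would next extract from the switching-deck the infinity norms $e_m^{\infty}(G)$ for all $m \le n-1$: these depend only on induced subgraphs on $m$ vertices and hence are visible through $(n-1-m)$-fold iterated vertex deletions performed on the cards. The same reasoning applies to the $1$-norm invariants of Section \ref{Sec:OneNorms}. In particular, a positive resolution of Conjecture \ref{oneequivalent} would yield Conjecture \ref{SERecon} immediately, since then the deck determines all $e_m^1$ and $e_m^\infty$ for $m \le n-1$, and one would only need to handle the top value $m=n$ separately.

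The main obstacle is precisely the obstacle that has kept the classical Reconstruction Conjecture \ref{Recon} open: passing from local information at every vertex to a global reconstruction of the graph. Even with all spectral and norm invariants extracted, the remaining task is to assemble the local switching equivalences $G_v \sim H_{\pi(v)}$ into a single global signed-permutation conjugation taking $S(G)$ to $S(H)$. The individual signings realizing $G_v \sim H_{\pi(v)}$ need not agree on their common entries, and there is no evident cocycle condition forcing consistency; moreover, any general proof strategy must cope with the obstructions already present in \cite{vDH} for non-switching reconstruction from the Seidel spectrum. It is presumably for this reason that the authors verify Conjecture \ref{SERecon} only by exhaustive computation (Section \ref{Sec:26}) rather than attempting a general argument.
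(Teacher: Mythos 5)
The statement you were given is a conjecture, and the paper does not prove it: the authors only verify it computationally, using GAP, for all switching equivalent classes on $4\le n\le 10$ vertices and for the four regular two-graph classes on $26$ vertices in Section \ref{Sec:26}. Your proposal, which ends by conceding that the assembly step is exactly the open core of the classical Reconstruction Conjecture \ref{Recon}, is therefore not a proof and could not reasonably be one; in that respect it is consistent with the paper's treatment. The genuinely sound parts of your sketch are the derivative identity $\phi'_{S(G)}=\sum_{v}\phi_{S(G_v)}$ (valid for principal submatrices of any symmetric matrix, and usable here because switching preserves the Seidel spectrum of each card) and the Kelly-type observation that the multiset of switching classes of $m$-vertex induced subgraphs, hence $e_m^{\infty}(G)$ and $e_m^{1}(G)$, is recoverable from $\mathcal{D}(G)$ for $m\le n-1$; these would be legitimate partial results toward the conjecture.

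Two steps, however, are wrong or unsupported as stated. First, the Euler-graph reduction has the parity backwards: if $n$ is odd then $G$ itself has a canonical Euler representative by Corollary \ref{EulerRep}, but its cards live on $n-1$ (even) vertices and so do not, which means the deck data is still only ``cards known up to switching'' --- strictly less information than an ordinary deck, not an instance of ordinary reconstruction for Euler graphs. The direction in which the Euler trick actually helps is the one the paper exploits in Section \ref{Sec:26}: $n$ even, cards on an odd number of vertices, so that comparing $\sim$-decks reduces, via Corollary \ref{EulerIso}, to comparing multisets of isomorphism classes of Euler graphs. Second, the claim that a positive resolution of Conjecture \ref{oneequivalent} would yield Conjecture \ref{SERecon} ``immediately'' is not justified: that route needs $e_m^{1}(G)$ for all $1\le m\le n$, and the top value $e_n^{1}(G)=1+\lambda_{\max}(S(G))/(n-1)$ requires the full Seidel spectrum of $G$, which your derivative identity determines only up to the constant term of the characteristic polynomial; you supply no mechanism for recovering that constant from a $\sim$-deck (the analogue of the classical result that the characteristic polynomial is reconstructible from an ordinary deck is not established here). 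So even as a conditional reduction the argument is incomplete, and the unconditional statement remains, exactly as the paper presents it, an open conjecture supported by finite computation.
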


A positive result for Conjecture \ref{Recon} would prove Conjecture
\ref{SERecon}. However, these conjectures are not equivalent. Also, many of the classes of graphs for which
Conjecture \ref{Recon} is known, i.e. disconnected graphs, regular
graphs, etc., can not be considered under Conjecture \ref{SERecon}
since their defining properties are not preserved by switching.

Revisiting the counterexamples from the previous sections, we see
that decks differentiate the switching equivalent classes.

\begin{exmp}\label{Deck6}
Consider the graphs from Example \ref{Ex:inf6}. The graphs $X_1$ and
$X_2$ on six vertices are not switching equivalent, and yet
$e_m^\infty(X_1)=e_m^\infty(X_2)$ for $1 \le m \le 6$.
$$
\begin{array}{cc}
\xy/r.8pc/:,
{\xypolygon6{~>{}~:{(1.875,0):}{\circ}}},
"1";"2"**@{-},
"1";"3"**@{-},
"2";"4"**@{-},
"4";"3"**@{-},
\endxy

\hspace{2cm}
&

\xy/r.8pc/:,
{\xypolygon6{~>{}~:{(1.875,0):}{\circ}}},
"1";"2"**@{-},
"1";"3"**@{-},
"2";"4"**@{-},
"4";"5"**@{-},
\endxy
 \\

X_1 \hspace{2cm} & X_2\\
\end{array}
$$
The deck of $X_1$ consists of the graphs
$$
\begin{array}{cc}
\xy/r.8pc/:,
{\xypolygon5{~>{}~:{(1.875,0):}{\circ}}},
"1";"2"**@{-},
"2";"3"**@{-},
\endxy

\hspace{2cm}
&

\xy/r.8pc/:,
{\xypolygon5{~>{}~:{(1.875,0):}{\circ}}},
"1";"2"**@{-},
"1";"4"**@{-},
"2";"3"**@{-},
"4";"3"**@{-},
\endxy
 \\

X_1^1 \hspace{2cm} & X_1^2\\
\end{array}
$$ where $X_1^1$ appears 4 times and $X_1^2$ appears twice. Switching $X_1^1$ on
 its vertices of even degree gives $X_1^2$, which is an Euler graph. The deck of $X_2$ consists of the graphs
$$
\begin{array}{cccc}
\xy/r.8pc/:,
{\xypolygon5{~>{}~:{(1.875,0):}{\circ}}},
"1";"2"**@{-},
"2";"3"**@{-},
"3";"4"**@{-},
"4";"5"**@{-},
\endxy

\hspace{2cm}
&
\xy/r.8pc/:,
{\xypolygon5{~>{}~:{(1.875,0):}{\circ}}},
"1";"2"**@{-},
"2";"3"**@{-},
"3";"4"**@{-},
\endxy

\hspace{2cm}
&
\xy/r.8pc/:,
{\xypolygon5{~>{}~:{(1.875,0):}{\circ}}},
"1";"2"**@{-},
"2";"3"**@{-},
\endxy

\hspace{2cm}
&

\xy/r.8pc/:,
{\xypolygon5{~>{}~:{(1.875,0):}{\circ}}},
"1";"2"**@{-},
"4";"3"**@{-},
\endxy
 \\

X_2^1 \hspace{2cm} & X_2^2 \hspace{2cm} & X_2^3 \hspace{2cm} & X_2^4 \\
\end{array}
$$
where $X_2^1$ and $X_2^4$ appear once, and $X_2^2$ and $X_2^3$ each appear twice. Switching $X_2^3$ on its even degree vertices gives a graph isomorphic to $X_1^2$, but none of the other graphs in this deck are switching equivalent to $X_1^2$ by Corollary \ref{EulerIso}. Therefore, $\mathcal{D}(X_1)\nsim\mathcal{D}(X_2)$.

\end{exmp}

\begin{exmp}\label{Deck8}
Consider the three graphs from Example \ref{Ex:SeiSpec}. For ease of reading, the following decks have already been switched to their Euler graph representatives using Theorem \ref{EulerSwitch}. The deck of $Y_1$ contains the following graphs
$$
\begin{array}{ccc}
\xy/r1.5pc/:,
{\xypolygon7{~>{}~:{(1.875,0):}{\circ}}},
"1";"3"**@{-},
"1";"4"**@{-},
"1";"5"**@{-},
"1";"6"**@{-},
"2";"3"**@{-},
"2";"7"**@{-},
"4";"5"**@{-},
"4";"6"**@{-},
"4";"7"**@{-},
\endxy

\hspace{1cm}
&

\xy/r1.5pc/:,
{\xypolygon7{~>{}~:{(1.875,0):}{\circ}}},
"1";"2"**@{-},
"1";"6"**@{-},
"2";"3"**@{-},
"2";"4"**@{-},
"2";"5"**@{-},
"3";"4"**@{-},
"3";"5"**@{-},
"3";"7"**@{-},
"4";"6"**@{-},
"4";"7"**@{-},
"5";"6"**@{-},
"5";"7"**@{-},
"6";"7"**@{-},
\endxy
\hspace{1cm}
&

\xy/r1.5pc/:,
{\xypolygon7{~>{}~:{(1.875,0):}{\circ}}},
"1";"4"**@{-},
"1";"7"**@{-},
"2";"3"**@{-},
"2";"4"**@{-},
"2";"6"**@{-},
"2";"7"**@{-},
"3";"4"**@{-},
"4";"7"**@{-},
"6";"7"**@{-},
\endxy
 \\

Y_1^1 \hspace{1cm} & Y_1^2 \hspace{1cm} & Y_1^3 \\
\end{array}
$$ with $Y_1^1$ and $Y_1^2$ each occurring 3 times and $Y_1^3$ occurring twice.
The deck of $Y_2$ contains the following graphs
$$
\begin{array}{cccc}
\xy/r1.5pc/:,
{\xypolygon7{~>{}~:{(1.875,0):}{\circ}}},
"1";"4"**@{-},
"1";"5"**@{-},
"2";"6"**@{-},
"2";"7"**@{-},
"3";"5"**@{-},
"3";"7"**@{-},
"4";"5"**@{-},
"5";"6"**@{-},
\endxy

\hspace{1cm}
&

\xy/r1.5pc/:,
{\xypolygon7{~>{}~:{(1.875,0):}{\circ}}},
"1";"2"**@{-},
"1";"3"**@{-},
"1";"4"**@{-},
"1";"7"**@{-},
"2";"3"**@{-},
"2";"5"**@{-},
"2";"6"**@{-},
"3";"4"**@{-},
"3";"6"**@{-},
"4";"5"**@{-},
"4";"6"**@{-},
"5";"6"**@{-},
"5";"7"**@{-},
\endxy
\hspace{1cm}
&

\xy/r1.5pc/:,
{\xypolygon7{~>{}~:{(1.875,0):}{\circ}}},
"1";"2"**@{-},
"1";"6"**@{-},
"2";"4"**@{-},
"2";"5"**@{-},
"2";"6"**@{-},
"3";"4"**@{-},
"3";"6"**@{-},
"4";"5"**@{-},
"4";"6"**@{-},
\endxy
\hspace{1cm}
&

\xy/r1.5pc/:,
{\xypolygon7{~>{}~:{(1.875,0):}{\circ}}},
"1";"4"**@{-},
"1";"5"**@{-},
"2";"3"**@{-},
"2";"4"**@{-},
"2";"5"**@{-},
"2";"7"**@{-},
"3";"5"**@{-},
"4";"5"**@{-},
"4";"7"**@{-},
"5";"6"**@{-},
"5";"7"**@{-},
"6";"7"**@{-},
\endxy

 \\

Y_2^1 \hspace{1cm} & Y_2^2 \hspace{1cm} & Y_2^3 \hspace{1cm} & Y_2^4 \\
\end{array}
$$ with $Y_2^1$ and $Y_2^2$ each occurring 3 times and $Y_2^3$ and $Y_2^4$ each occurring once.
The deck of $Y_3$ contains the following graphs
$$
\begin{array}{ccc}
\xy/r1.5pc/:,
{\xypolygon7{~>{}~:{(1.875,0):}{\circ}}},
"1";"2"**@{-},
"1";"3"**@{-},
"1";"4"**@{-},
"1";"5"**@{-},
"1";"6"**@{-},
"1";"7"**@{-},
"2";"3"**@{-},
"2";"4"**@{-},
"2";"6"**@{-},
"4";"5"**@{-},
"4";"6"**@{-},
"6";"7"**@{-},
\endxy

\hspace{1cm}
&

\xy/r1.5pc/:,
{\xypolygon7{~>{}~:{(1.875,0):}{\circ}}},
"1";"2"**@{-},
"1";"3"**@{-},
"1";"5"**@{-},
"1";"6"**@{-},
"2";"3"**@{-},
"2";"5"**@{-},
"2";"6"**@{-},
"3";"4"**@{-},
"3";"7"**@{-},
"4";"6"**@{-},
"5";"6"**@{-},
"5";"7"**@{-},
\endxy
\hspace{1cm}
&

\xy/r1.5pc/:,
{\xypolygon7{~>{}~:{(1.875,0):}{\circ}}},
"1";"2"**@{-},
"1";"5"**@{-},
"2";"5"**@{-},
"3";"4"**@{-},
"3";"6"**@{-},
"4";"5"**@{-},
"5";"7"**@{-},
"6";"7"**@{-},
\endxy
 \\

Y_3^1 \hspace{1cm} & Y_3^2 \hspace{1cm} & Y_3^3 \\
\end{array}
$$ with $Y_3^1$ occurring twice and $Y_3^2$ and $Y_3^3$ each occurring three times. By quick inspection and Corollary \ref{EulerIso}, no pair of these three decks is switching equivalent.
\end{exmp}

Examples \ref{Deck6} and \ref{Deck8} show that decks differentiate switching equivalent classes in cases where the infinity norm and Seidel spectrum do not. Using programs
written in GAP, \cite{GAP}, representatives for the switching
equivalent classes have been constructed for $4\leq n\leq 10$
vertices. Additional programs in GAP have verified Conjecture
\ref{SERecon} for these representatives.

Conjecture \ref{SERecon} can be rewritten as a test of switching equivalence as follows.

\begin{conj}\label{deckequivalent}
Let $X_1$ and $X_2$ be graphs on $n$ vertices. Then $X_1$ and $X_2$
are switching equivalent if and only if $\mathcal{D}(G) \sim
\mathcal{D}(H)$.
\end{conj}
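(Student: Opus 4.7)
The plan is to split the biconditional. The forward implication---that switching equivalent graphs produce switching equivalent decks---should be elementary, while the reverse implication is a verbatim restatement of Conjecture \ref{SERecon} and is the heart of the conjecture.

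For the forward direction, suppose $X_2 \cong X_1^\tau$ via an isomorphism $\sigma$ for some $\tau \subseteq V(X_1)$. The key observation is that switching and vertex deletion commute in the following sense: for any $v \in V(X_1)$, the induced subgraph $X_1^\tau - v$ is obtained from $X_1 - v$ by switching on the set $\tau \cap (V(X_1) \setminus \{v\})$. Consequently $X_1 - v$ and $X_1^\tau - v$ lie in the same switching equivalent class, and hence so do $X_1 - v$ and $X_2 - \sigma(v)$. Defining $\pi(X_1 - v) := X_2 - \sigma(v)$ produces the required bijection $\pi : \mathcal{D}(X_1) \to \mathcal{D}(X_2)$ with $\pi(G) \in [[G]]$.

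For the reverse direction, the statement reads: if $\mathcal{D}(X_1) \sim \mathcal{D}(X_2)$, then $X_1 \in [[X_2]]$, which is precisely Conjecture \ref{SERecon}. A natural approach, when $n$ is even so that each card has an odd number of vertices, is to apply Theorem \ref{EulerSwitch} and Corollary \ref{EulerIso} to canonicalize every card as its unique Euler representative; the problem then reduces to reconstructing the switching class of $X_1$ from a multiset of unlabeled Euler graphs on $n-1$ vertices. When $n$ is odd no such canonical representative exists, but one can instead extract Seidel-spectral data from each card, since the Seidel spectrum is constant on each switching equivalent class.

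The main obstacle is genuine: switching is a global operation, so classical Kelly-type subgraph-counting arguments from ordinary reconstruction theory do not transfer directly. A single edge flip alters labeled subgraph counts dramatically, while switching produces a coordinated change to which such counts are blind. A successful proof likely requires identifying a new switching-invariant statistic of cards---plausibly assembled from Seidel spectra, signed subgraph counts, or counts of induced configurations modulo switching---rich enough to determine the switching class from deck data. The computational confirmation for $n \leq 10$ and for the four classes on $26$ vertices treated in Section \ref{Sec:26} supports the conjecture but sidesteps rather than resolves this obstacle; identifying such an invariant is where a general proof must begin.
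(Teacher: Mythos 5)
The statement you were asked to prove is a conjecture, and the paper supplies no proof of it: the authors only verify it computationally for representatives of all switching equivalent classes on $4 \le n \le 10$ vertices and for the four regular two-graph classes on $26$ vertices, and they explicitly present Conjecture \ref{deckequivalent} as a rewriting of Conjecture \ref{SERecon}. Your treatment is therefore the right one. Your forward-direction argument is correct and makes explicit what the paper leaves implicit: if $D$ is the $\pm 1$ diagonal matrix realizing the switch on $\tau$, then deleting the row and column of $v$ from $DS(X_1)D$ yields $D'S(X_1-v)D'$ with $D'$ the restriction of $D$, so $X_1^\tau - v = (X_1 - v)^{\tau\setminus\{v\}}$ and the vertex bijection $\sigma$ induces the required deck bijection $\pi$ with $\pi(x)\in[[x]]$. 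You also correctly identify the reverse direction as precisely Conjecture \ref{SERecon}, and your sketch of the even-$n$ strategy (canonicalize each card on the odd number $n-1$ of vertices to its unique Euler representative via Theorem \ref{EulerSwitch} and Corollary \ref{EulerIso}) is exactly the procedure the paper carries out in Section \ref{Sec:26} for $n=26$. Your diagnosis of the obstacle --- that switching is global and defeats Kelly-type counting arguments, so a new switching-invariant card statistic is needed --- is a fair assessment of why the conjecture remains open; since neither you nor the paper closes that gap, there is nothing further to compare.
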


\section{One Norms}\label{Sec:OneNorms}

While decks seem to differentiate switching equivalent classes of graphs, there is another candidate which is more strongly tied to our motivation, as described in Appendix \ref{App:Frames}.

\begin{defn} \label{onenormdefA}
Let $\mathcal{D}_m$ denote the set of diagonal matrices that have
exactly $m$ diagonal entries equal to one and $n-m$ entries equal to
zero.  Given a graph $X$ on $n$ vertices, set
$$
e_m^1(X):= {n \choose m}^{-1} \sum_{D \in \mathcal{D}_m} \| D(I+cS)D \|,
$$
where $S$ is the Seidel adjacency matrix of $X$, $c=\frac{1}{n-1}$, and the norm of the matrix is understood to be the operator norm.
\end{defn}

The $1$-norm is related to the infinity norm defined in Section \ref{Sec:InfNorms} since it is an average of the same list of numbers of which the infinity norm was returning the maximum. Another way to think about the $1$-norm is to consider all induced subgraphs of $X$ on $m$ vertices. This collection of subgraphs can be partitioned according to their infinity norms. The computation of $e_m^1(X)$ follows from counting the number of graphs in each element of the partition.

\begin{exmp}
Returning to Example \ref{Ex:inf6}, Table \ref{counter-example} is expanded to Table \ref{counter-exampleB}, giving the $1$-norm values.
\begin{table}[h]
\begin{center}
\renewcommand{\arraystretch}{1.25}
\begin{tabular}{|c|c|c|c|c|} \hline
$m$ & $e_m^{\infty}(X_1)$ & $e_m^{\infty}(X_2)$ &   $e_m^1(X_1)$   &   $e_m^1(X_2)$    \\ \hline
3   & $ 1.\bar{4}$  &  $ 1.\bar{4}$ &   $ 1.32$ &   $ 1.32$   \\ \hline
4   & $ 1.\bar{6}$  &  $ 1.\bar{6}$ &   $ 1.479$ &   $ 1.442$   \\ \hline
5   & $ 1.\bar{6}$  &  $ 1.\bar{6}$ &   $ 1.\bar{6}$ &   $ 1.52$    \\ \hline
6   & $ 1.\bar{6}$  &  $ 1.\bar{6}$ &   $ 1.\bar{6}$ &   $ 1.\bar{6}$    \\ \hline
\end{tabular}
\caption{counter-example} \label{counter-exampleB}
\end{center}
\end{table}
In this case, where the infinity norm failed, the $1$-norm differentiates these classes of graphs.
\end{exmp}

As for the infinity norm, the authors implemented programs in Maple 11, \cite{Map}, to compute $1$-norms of arbitrary graphs. Using the class representatives computed in GAP, \cite{GAP}, the $1$-norms have been calculated for all switching equivalent classes on $4\leq n\leq 10$ vertices. The obtained results support the following conjecture.

\begin{conj}\label{oneequivalent}
Let $X_1$ and $X_2$ be graphs on $n$ vertices. Then, $X_1$ and $X_2$ are switching equivalent if and only if
$e_m^{1}(X_1)=e_m^{1}(X_2)$ for $1 \leq m \leq n$.
\end{conj}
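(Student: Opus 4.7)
For the forward implication, I would argue as follows. Each matrix $I_n + cS(X)$ is positive semi-definite by Proposition~\ref{moo}, and the same argument applied to the $m\times m$ principal submatrix indexed by $\sigma$ shows that for $D\in\mathcal{D}_m$ supported on $\sigma$ the matrix $D(I_n+cS(X))D$ has operator norm $1+c\lambda_{\max}(S_X[\sigma])$, which depends only on the Seidel spectrum of the induced subgraph $X[\sigma]$. It then suffices to check that switching equivalence preserves the multiset $\{[[X[\sigma]]]:|\sigma|=m\}$, which is immediate: switching $X$ on $\tau$ replaces each $X[\sigma]$ by $(X[\sigma])^{\sigma\cap\tau}$, lying in the same switching equivalent class, while a relabeling of $X$ merely permutes which $m$-subsets arise. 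Averaging $\|D(I_n+cS(X))D\|$ over $D\in\mathcal{D}_m$ then yields $e_m^1(X_1)=e_m^1(X_2)$.

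For the converse, the plan is to rewrite $e_m^1(X)=1+c\,\overline{\lambda}_m(X)$ where $\overline{\lambda}_m(X):=\binom{n}{m}^{-1}\sum_{|\sigma|=m}\lambda_{\max}(S_X[\sigma])$ is the mean of the top Seidel eigenvalues of the $m$-vertex induced subgraphs, and then to show that the sequence $\overline{\lambda}_1(X),\ldots,\overline{\lambda}_n(X)$ determines the switching equivalent class. My strategy would be to couple this conjecture with Conjecture~\ref{deckequivalent}: first prove that the $1$-norm data determines the switching-equivalent deck $\{[[X[\sigma]]]:|\sigma|=n-1\}$, then invoke the deck conjecture to conclude that $X_1$ and $X_2$ are switching equivalent. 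A natural route to the first step is downward induction on $m$, beginning at $m=n$ (which pins down $\lambda_{\max}(S(X))$) and using an inclusion-exclusion relation among the $\overline{\lambda}_m$'s, since each $m$-subset sits inside $n-m$ distinct $(m+1)$-subsets.

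The hard part will be the irreversibility of the operator norm: it records only the top eigenvalue of each restricted Seidel matrix and discards most spectral information, while the number of switching-equivalent classes grows super-polynomially in $n$. A priori, $n$ real numbers cannot separate so many classes, so any proof must exploit hidden algebraic identities forcing the averaged top eigenvalues to carry much more information than dimension counting suggests. To make this tractable, I would supplement the induction with explicit combinatorial identities relating $\overline{\lambda}_m(X)$ to counts of distinguished induced substructures of $X$, such as the induced complete bipartite and empty subgraphs that saturate the bound in Theorem~\ref{equality}, and attempt to show that these counts alone determine $[[X]]$. The computational verifications for $n\le 10$ and for the four regular two-graphs on $26$ vertices in Section~\ref{Sec:26} would serve both as a sanity check and as a source of candidate invariants whose identification should drive the general proof.
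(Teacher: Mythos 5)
The statement you are addressing is stated in the paper as a \emph{conjecture}, not a theorem: the paper offers no proof, only computational verification for $4 \le n \le 10$ and for the four regular two-graphs on $26$ vertices in Section~\ref{Sec:26}. Your forward implication is correct and essentially complete: since $I_n + cS$ is positive semi-definite (Proposition~\ref{moo}) and principal submatrices of positive semi-definite matrices are positive semi-definite, $\|D(I_n+cS)D\|$ equals $1 + c\lambda_{\max}(S_X[\sigma])$, and conjugation by a signed permutation matrix preserves the multiset of spectra of the $m\times m$ principal submatrices, hence their average. This direction is already implicit in the paper's observation that $[[X]]$ consists of the graphs obtained by conjugating $S(X)$ by signed permutation matrices.

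The converse, however, is the entire content of the conjecture, and your proposal does not prove it. There are two concrete gaps. First, your reduction to Conjecture~\ref{deckequivalent} replaces one open conjecture with another, so even if the intermediate step succeeded you would obtain only a conditional statement. Second, the intermediate step itself --- that the sequence $e_1^1(X),\dots,e_n^1(X)$ determines the multiset $\{[[X[\sigma]]] : |\sigma| = n-1\}$ --- is unsupported: each $\overline{\lambda}_m(X)$ is a single scalar average over all $\binom{n}{m}$ subsets, and the inclusion--exclusion relation you invoke (each $m$-subset lies in $n-m$ subsets of size $m+1$) relates averages to averages, not multisets to multisets, so it cannot recover which switching equivalent classes occur in the deck or with what multiplicities. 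You correctly identify the dimension-counting obstruction ($n$ real numbers against $2^{\frac{1}{2}n^2 - O(n\log n)}$ classes), but naming the obstruction is not the same as overcoming it, and the ``hidden algebraic identities'' you appeal to are not exhibited. The honest status of your write-up is a correct proof of the easy direction plus a research program for the hard one --- which is exactly the position the paper itself takes, supporting the claim only by the computations you cite.
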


\section{An Important Example}\label{Sec:26}

Since the $4$ nonswitching equivalent classes on $26$ vertices mentioned in Example \ref{26example} are well known and are a clear counterexample in the Seidel spectrum case, we give the results of our conjectures applied to them here. We are grateful to Spence for providing representatives for these classes in \cite{Spe}. For the purposes of this section, we refer to these four representatives as $Q_1$, $Q_2$, $Q_3$, and $Q_4$.

\subsection{Decks}
Using programs written in GAP, \cite{GAP}, the decks of these graphs are quickly produced. To simplify checking switching equivalence for graphs on $25$ vertices, all of the cards are switched to their unique Euler representative as described in Theorem \ref{EulerSwitch}. This gives four sets of $26$ $12$-regular graphs on $25$ vertices. Using the GAP package GRAPE, \cite{GRAPE}, which relies on the C-program nauty, \cite{Nauty}, the isomorphism classes of these Euler graphs have been identified. By Corollary \ref{EulerIso}, the isomorphism classes of the Euler representatives give the switching equivalent classes of the cards in the deck. For two decks to be switching equivalent, there have to be the same number of cards in each switching equivalent class. The graphs $Q_1$, $Q_2$, $Q_3$, and $Q_4$ have $8$, $1$, $2$, and $4$ switching equivalent classes represented in their decks, respectively. Therefore, Conjecture \ref{deckequivalent} holds for these four important, see \cite{BP}, switching equivalent classes on $26$ vertices.

\subsection{One Norms}

The Interlacing Theorem gives evidence that the matrices $Q_1$,
$Q_2$, $Q_3$, and $Q_4$ are a good test for Conjecture
\ref{oneequivalent}.

\begin{thm}[The Interlacing Theorem]\label{interthm}
Let $A$ be an $n\times n$ symmetric matrix with eigenvalues
$$\lambda_1\geq\lambda_2\geq\dots\geq\lambda_n,$$
and let $B$ be obtained by removing the $i^{th}$ row and column of $A$ and suppose $B$ has eigenvalues
$$\mu_1\geq\mu_2\geq\dots\geq\mu_{n-1}.$$
Then the eigenvalues of $B$ interlace those of $A$, that is,
$$\lambda_1\geq\mu_1\geq\lambda_2\geq\dots\geq\lambda_{n-1}\geq\mu_{n-1}\geq\lambda_n.$$
\end{thm}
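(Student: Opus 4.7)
The Interlacing Theorem is a classical result (often attributed to Cauchy), so the proposal is to reproduce the standard Courant--Fischer style argument. The plan is to express both the eigenvalues $\lambda_k$ of $A$ and the eigenvalues $\mu_k$ of $B$ through the min--max characterization, and then to notice that the optimization defining $\mu_k$ runs over a restricted collection of subspaces.

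First I would identify $B$ with the restriction of the quadratic form $x \mapsto \langle Ax, x\rangle$ to the hyperplane
\[
W = \{x \in \mathbb{R}^n : x_i = 0\},
\]
since deleting the $i$-th row and column of $A$ corresponds exactly to forgetting the $i$-th coordinate. Because $A$ is symmetric, the Courant--Fischer theorem gives
\[
\lambda_k = \max_{\substack{V \subseteq \mathbb{R}^n \\ \dim V = k}} \min_{\substack{x \in V \\ x \neq 0}} \frac{\langle Ax, x\rangle}{\langle x, x\rangle},
\qquad
\mu_k = \max_{\substack{V \subseteq W \\ \dim V = k}} \min_{\substack{x \in V \\ x \neq 0}} \frac{\langle Ax, x\rangle}{\langle x, x\rangle}.
\]
Since every $k$-dimensional subspace of $W$ is in particular a $k$-dimensional subspace of $\mathbb{R}^n$, the maximum in the formula for $\mu_k$ is taken over a smaller family, which immediately yields $\mu_k \leq \lambda_k$ for each $k = 1, \ldots, n-1$.

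For the reverse inequality $\lambda_{k+1} \leq \mu_k$, I would use the dual min--max expression
\[
\lambda_{k+1} = \min_{\substack{V \subseteq \mathbb{R}^n \\ \dim V = n-k}} \max_{\substack{x \in V \\ x \neq 0}} \frac{\langle Ax, x\rangle}{\langle x, x\rangle},
\qquad
\mu_k = \min_{\substack{V \subseteq W \\ \dim V = n-1-k}} \max_{\substack{x \in V \\ x \neq 0}} \frac{\langle Ax, x\rangle}{\langle x, x\rangle},
\]
and observe that for any $(n-1-k)$-dimensional subspace $V \subseteq W$, the space $V$ itself is an $(n-1-k)$-dimensional subspace of $\mathbb{R}^n$; appending any vector outside $W$ extends it to an $(n-k)$-dimensional subspace $V' \subseteq \mathbb{R}^n$, and the Rayleigh quotient can only grow on the larger set. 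Thus $\lambda_{k+1} \leq \mu_k$.

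The only delicate point, and the step I expect to require the most care in the write-up, is the dimension bookkeeping in the second half: one must ensure that the constructed extension $V'$ of $V$ has dimension exactly $n-k$ (so that it is eligible in the min--max for $\lambda_{k+1}$) and that taking the maximum over the larger set $V'$ rather than $V$ correctly yields the desired inequality. Once the two one-sided bounds are combined, the chain
\[
\lambda_1 \geq \mu_1 \geq \lambda_2 \geq \cdots \geq \lambda_{n-1} \geq \mu_{n-1} \geq \lambda_n
\]
follows at once, completing the proof.
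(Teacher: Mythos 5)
The paper does not actually prove this theorem: it is quoted as a classical result and the proof is deferred to the reference \cite{LS}, so any correct standard argument would be an acceptable substitute. Your Courant--Fischer strategy is the canonical one, and the first half ($\mu_k \leq \lambda_k$, obtained by restricting the family of $k$-dimensional subspaces to those contained in the hyperplane $W$) is correct as written.

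The second half, however, contains a genuine error, precisely at the spot you flagged as delicate. The dual min--max characterization of $\mu_k$ inside the $(n-1)$-dimensional space $W$ runs over subspaces of dimension $(n-1)-k+1 = n-k$, not $n-1-k$; as written, your formula characterizes $\mu_{k+1}$ rather than $\mu_k$. Moreover, the repair you propose --- extending a subspace $V \subseteq W$ by one dimension to $V'$ and invoking ``the Rayleigh quotient can only grow on the larger set'' --- produces an inequality pointing the wrong way: it shows $\max_{x \in V'} R(x) \geq \max_{x \in V} R(x)$, which bounds the quantity you need from below rather than from above, so $\lambda_{k+1} \leq \mu_k$ does not follow from it. The fix is that no extension is needed at all: with the correct dimension $n-k$ on both sides, every admissible subspace $U \subseteq W$ in the minimum defining $\mu_k$ is already an admissible $(n-k)$-dimensional subspace of $\mathbb{R}^n$ in the minimum defining $\lambda_{k+1}$, so the latter minimum is taken over a strictly larger family and $\lambda_{k+1} \leq \mu_k$ follows by exactly the same one-line containment argument as in the first half.
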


A proof of Theorem \ref{interthm} can be found in \cite{LS}. Since
$Q_1$, $Q_2$, $Q_3$, and $Q_4$ all have eigenvalues $5$ and $-5$,
each with multiplicity $13$, Theorem \ref{interthm} allows us to
find the values of $e_m^1(Q_i)$ without direct computation for $14
\le m \le 26$. Recall from Definition \ref{onenormdefA}, $1$-norms
are averaged sums of $\|D(I+cX)D\|$, where $X$ is the Seidel
adjacency matrix, $I$ is an identity matrix, and $D$ is a matrix
which deletes $n-m$ rows and their corresponding columns. Applying
the Interlacing Theorem, we get that when $n-m<13$,
$\|D(I+\frac{1}{25}Q_i)D\|=1.2$ for $1\leq i\leq 4$. So,
$e_m^1(Q_i)=1.2$ for $14\leq m\leq 26$. This limited variation
provides a good test for Conjecture \ref{oneequivalent}.

We used our programs written in Maple 11, see \cite{Map}, to
evaluate the $1$-norms for these four classes. The results
summarized in Table \ref{26vert} show that Conjecture
\ref{oneequivalent} holds for these four important switching
equivalent classes on $26$ vertices.

\begin{table}[h]
\begin{center}
\renewcommand{\arraystretch}{1.25}
\begin{tabular}{|c|c|c|c|c|} \hline
$m$ & $e_m^1(Q_1)$ & $e_m^1(Q_2)$ &   $e_m^1(Q_3)$   &   $e_m^1(Q_4)$    \\ \hline
$3$   & $1.06$ & $1.06$ & $1.06$ & $1.06$   \\ \hline
$4$   & $1.0873899540482$ & $1.0873899540482$ & $1.0873899540482$ & $1.0873899540482$ \\ \hline
$5$   & $1.1071147791905$ & $1.1071399835086$ & $1.1069232263711$ & $1.1069433898254$ \\ \hline
$6$   & $1.1253569536629$ & $1.1253899928320$ & $1.1251058556967$ & $1.1251322799265$ \\ \hline
\end{tabular}
\caption{26 Vertex Example} \label{26vert}
\end{center}
\end{table}

In light of our results, we
feel that Conjectures \ref{oneequivalent} and \ref{deckequivalent} deserve further study.

\appendix
\section{Motivation and Frame Theory}\label{App:Frames}

In this section we give a brief introduction to frame theory in
order to discuss the motivation behind studying the qualities
$e_m^{\infty}(X)$ and $e_m^{p}(X)$ for a given graph $X$. Strohmer
and Heath in \cite{SH} first introduced the frame theory community
to results in graph theory which yield examples of $2$-uniform
frames. In \cite{HP} and \cite{BP}, Bodmann, Holmes and Paulsen take
advantage of the one-to-one correspondence between regular
two-graphs and $2$-uniform frames to give a complete list of all
$2$-uniform $(n,k)$-frames for $n \leq 50$.

\begin{defn}
Let $\mathcal{H}$ be a Hilbert space, real or complex, and let $F = \{ f_i \}_{i\in\mathbb{I}} \subset \mathcal{H}$ be a subset. Then, $F$ is a {\bf frame} for $\mathcal{H}$ provided that there are two constants $C, \ D > 0$ such that the norm inequalities
$$
C \cdot \| x \|^2 \leq \sum_{j \in \mathbb{I}} | \langle x,f_j \rangle |^2 \leq D \cdot \|x \|^2
$$
hold for every $x \in \mathcal{H}$. Here $\langle \cdot , \cdot \rangle$ denotes the inner product of two vectors which is by convention conjugate linear in the second entry if $\mathcal{H}$ is a complex Hilbert space.

When $C=D=1$, then  $F$ is called a {\bf Parseval frame}. A frame is called {\bf uniform} provided there is a constant $c$ so that $\|f \|=c$ for all $f \in F$.

The linear map $V: \mathcal{H} \rightarrow l_2(\mathbb{I})$ defined by
$$
(Vx)_i=\langle x, f_i \rangle
$$
is called the {\bf analysis operator}. When $F$ is a Parseval frame, then $V$ is an isometry; and its adjoint, $V^*$, acts as a left inverse of $V$.
\end{defn}

For the purposes of this paper, $\mathcal{H}$ will be a finite dimensional real Hilbert space and frames for these spaces will consist of finitely many vectors. If the dimension of $\mathcal{H}$ is $k$, then we will identify $\mathcal{H}$ with $\mathbb{R}^k$.

\begin{defn}
Let $\mathcal{F}(n,k)$ denote the collection of all Parseval frames for $\mathbb{R}^k$ consisting of $n$ vectors and refer to such a frame as a real {\bf (n,k)-frame}. Thus, a uniform $(n,k)$-frame is a uniform Parseval frame for $\mathbb{R}^k$ with $n$ vectors.
\end{defn}

The idea behind treating frames as codes is studied in depth in \cite{HP} and \cite{BP}. For a more detailed study of uniform $(n,k)$-frames and $2$-uniform $(n,k)$-frames see \cite{BP}, and for an excellent survey on frames see \cite{Cas} or \cite{KC}. Given a vector $x$ in $\mathbb{R}^k$ and an $(n,k)$-frame with analysis operator $V$, consider the vector
$Vx$ in $\mathbb{R}^n$ as an encoded version of $x$, and simply decode $Vx$ by applying $V^*$. Let $E$ denote the diagonal matrix of $m$ zeros and $n-m$ ones. Thus the vector $EVx$ is just the vector $Vx$ with $m$-components erased corresponding to the zeros in the diagonal entries of $E$.
One way to decode the received vector $EVx$ with $m$ erasures is to again apply $V^*$. The error in reconstructing $x$ this way is given by
$$
\| x-V^*EV \| = \| V^*(I-E)Vx \| = \| V^*DVx \|
$$
where $D$ is the diagonal matrix of $m$ ones and $n-m$ zeros. This is only one of several methods possible for reconstructing $x$. However, it is this particular method which led Bodmann and Paulsen in \cite{BP} to introduce the following definition. The first quantity in Definition \ref{inftynormdef} represents the maximal norm of an error operator given that some set of $m$ erasures occurs, and the second quantity represents an $l^p$-average of the norm of the error operator over the set of all possible $m$ erasures.

\begin{defn} \label{inftynormdef}
Let $\mathcal{D}_m$ denote the set of diagonal matrices that have
exactly $m$ diagonal entries equal to one and $n-m$ entries equal to
zero.  Given an $(n,k)$-frame $F$, set
$$
e_m^{\infty}(F):=\mathrm{max}\{ \| V^*DV \|  : D \in \mathcal{D}_m \},
$$
and for $1 \leq p$,
$$
e_m^p(F)=\left\{ {n \choose m}^{-1} \sum_{D \in \mathcal{D}_m} \| V^*DV \|^p \right\}^{\frac{1}{p}},
$$
where $V$ is the analysis operator of $F$, and the norm of the matrix is understood to be the operator norm.
\end{defn}

\begin{defn}
$F$ is called a {\bf 2-uniform (n,k)-frame} provided that $F$ is a uniform $(n,k)$-frame, and in addition
$\| V^*DV \|$ is a constant for all $D$ in $\mathcal{D}_2$.
\end{defn}

Theorem \ref{framegraphthm} below is a restatement of Theorems 4.7 and 4.8 from \cite{BP}. It states the one-to-one correspondence between regular two-graphs and $2$-uniform frames used to give a complete list of all pairs
$(n,k)$ for $n \leq 50$ for which $2$-uniform $(n,k)$ frames exist over the reals, together with what is known about the numbers of frame equivalence classes. Unlike uniform frames, $2$-uniform frames do not exist for all values of $k$ and $n$. However, $2$-uniform frames turn out to be optimal for one and two erasures when they do exist. Note that a complete list of all $2$-uniform frames over the complex field for $n \leq 50$ is still not known.

\begin{thm} \label{framegraphthm}
The following are equivalent:
\begin{enumerate}
\item $Q$ is an $n \times n$ signature matrix of a real
$2$-uniform $(n,k)$-frame.
\item $Q$ is the Seidel adjacency matrix of a graph on $n$ vertices with $2$ eigenvalues and in this case, $k$ is the multiplicity
of the largest eigenvalue.
\item  $Q$ is the Seidel
adjacency matrix of a graph on $n$ vertices whose switching class is
a regular two-graph on $n$ vertices with parameter $\alpha$.
\end{enumerate}
\end{thm}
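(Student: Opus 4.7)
\bigskip
\noindent \textbf{Proof plan for Theorem \ref{framegraphthm}.}

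The plan is to establish the cycle $(1)\Rightarrow(2)\Rightarrow(3)\Rightarrow(1)$, relying on two standard pieces: an explicit formula for $V^{*}DV$ when $D\in\mathcal{D}_2$, and the classical spectral characterization of regular two-graphs found in \cite{Sei} and \cite{vLS}. Since the statement is a restatement of Theorems 4.7 and 4.8 of \cite{BP}, the detailed computations may be cited; below I sketch the route.

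For $(1)\Leftrightarrow(2)$, start from a uniform Parseval frame $F=\{f_i\}_{i=1}^n$ with analysis operator $V$. Parseval gives $V^{*}V=I_k$ and hence $\operatorname{tr}(VV^{*})=k$, so uniformity forces $\|f_i\|^2=k/n$ for all $i$. For $D\in\mathcal{D}_2$ selecting indices $i\neq j$, one computes $V^{*}DV=f_if_i^{*}+f_jf_j^{*}$, whose operator norm equals $\|f_i\|^2+|\langle f_i,f_j\rangle|=k/n+|\langle f_i,f_j\rangle|$. Thus $F$ is $2$-uniform exactly when $|\langle f_i,f_j\rangle|$ is constant over $i\neq j$. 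Writing the Gram matrix $G=VV^{*}$ in the form $G=(k/n)I+\beta Q$ with $Q_{ii}=0$ and $Q_{ij}=\operatorname{sign}\langle f_i,f_j\rangle$ for $i\neq j$, one sees that $Q$ is (up to sign convention) a Seidel adjacency matrix. Because $G=VV^{*}$ is the orthogonal projection onto $\operatorname{range}(V)$, it has eigenvalue $1$ with multiplicity $k$ and $0$ with multiplicity $n-k$; consequently $Q$ has exactly two eigenvalues, the $+$ one occurring with multiplicity $k$. Conversely, any Seidel matrix $Q$ with exactly two eigenvalues $\rho_1>\rho_2$ yields a rank-$k$ positive projection $G=(Q-\rho_2I)/(\rho_1-\rho_2)$, whose Cholesky-type factorization $G=VV^{*}$ produces the analysis operator of a $2$-uniform $(n,k)$-frame, where $k$ is the multiplicity of $\rho_1$.

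For $(2)\Leftrightarrow(3)$, invoke the classical result of Seidel: a switching class of graphs is a regular two-graph with parameter $\alpha$ precisely when every (equivalently, some) Seidel adjacency matrix $Q$ in the class has exactly two distinct eigenvalues $\rho_1,\rho_2$, in which case the characteristic equation $(Q-\rho_1I)(Q-\rho_2I)=0$ encodes the combinatorial regularity condition and the parameter $\alpha$ is determined by $\rho_1,\rho_2$, and $n$. Since the spectrum of $Q$ is a switching invariant (conjugation by a signed permutation matrix as in Section~\ref{Sec:Switching}), the property ``two eigenvalues'' descends to the switching class, completing the equivalence with $(2)$.

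The main obstacle is the bookkeeping in the $(1)\Leftrightarrow(2)$ step: one must verify that the normalization constants $k/n$ and $\beta$ match the scaling built into the definition of a signature matrix in \cite{BP}, and check that the construction of $V$ from a two-eigenvalued $Q$ does yield a genuine Parseval frame (i.e., that $V^{*}V=I_k$, not just that $VV^{*}$ is a projection of the right rank). Both points are routine but must be done carefully to ensure that the correspondence $Q\leftrightarrow F$ is bijective and that the parameter $k$ on both sides agrees with the multiplicity of the largest eigenvalue of $Q$.
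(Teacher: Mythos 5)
The paper offers no proof of this theorem: it is stated verbatim as a restatement of Theorems 4.7 and 4.8 of \cite{BP}, so there is nothing internal to compare against. Your sketch is a correct reconstruction of the standard argument behind those results: the computation $\|f_if_i^{*}+f_jf_j^{*}\|=\|f_i\|^2+|\langle f_i,f_j\rangle|$ (valid because uniformity gives $\|f_i\|=\|f_j\|$) yields $(1)\Leftrightarrow(2)$ via the two-eigenvalue condition on $Q$, and $(2)\Leftrightarrow(3)$ is Seidel's classical spectral characterization of regular two-graphs, which the paper's own references \cite{Sei}, \cite{vLS} supply. The one ``obstacle'' you flag in the converse direction dissolves on inspection: if $G=(Q-\rho_2 I)/(\rho_1-\rho_2)$ is an orthogonal projection of rank $k$ and you factor $G=VV^{*}$ with $V$ of size $n\times k$ and full rank, then $V^{*}V$ and $VV^{*}$ have the same nonzero spectrum with multiplicity, so $V^{*}V=I_k$ is automatic and the resulting frame is genuinely Parseval. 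Given that the authors themselves defer entirely to \cite{BP}, your level of detail is more than adequate.
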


In \cite{HP}, given a $2$-uniform $(n,k)$-frame $F$ with analysis operator $V$,
Holmes and Paulsen show that the projection $P=VV^{*}$ can be
written as
\begin{equation} \label{cpq}
P=\frac{k}{n}I+c_{n,k}Q
\end{equation}
where $Q$ satisfies the conditions $q_{ii}=0$ and $|q_{ij}|=1$
for $i \not= j$.  Furthermore, $Q$ has precisely two eigenvalues and
\begin{equation}
c_{n,k} = \sqrt{\frac{k(n-k)}{n^2(n-1)}}. \label{constant}
\end{equation}
The projection matrix $P$ is called the \textit{autocorrelation matrix}
of $F$, and the $n\times n$ self-adjoint matrix $Q$ is called the
\textit{signature matrix} of $F$.  The rank of the projection $P$ is $k$ where the eigenvalues $0$ and $1$ have multiplicities $n-k$ and $k$ respectively. Note that the constant $c_{n,k}$ in equation (\ref{constant}) is such that $c_{n,k}=\| V^*DV \|$ for all $D$ in $\mathcal{D}_2$.

We are now ready to discuss the motivation for the definition of $e_m^{\infty}(X)$ and $e_m^1(X)$ stated in Sections \ref{Sec:InfNorms} and \ref{Sec:OneNorms}. Consider an arbitrary 2-uniform
$(n,k)$-frame $F$, the
maximal norm of the error operator given that some set of $m$
erasures occurs, is given by the formulas,
\begin{align*}
e_m^{\infty}(F)& = \mathrm{max}\{ \| V^*DV \|  : D \in \mathcal{D}_m \}\\
               & = \mathrm{max}\{ \| DVV^*D \|  : D \in \mathcal{D}_m \}\\
               & = \mathrm{max}\{ \| D(\frac{k}{n}I+c_{n,k}Q)D \|  : D \in \mathcal{D}_m \}
\text{ and } \\
e_m^{\infty}(F) & \leq \frac{k}{n} + c_{n,k}(m-1),
\end{align*}
with equality if and only if the corresponding graph $X_F$ contains
an induced subgraph on $m$ vertices that is complete bipartite or
empty (Theorem 5.3 in \cite{BP}).

If $Q$ is any signature
matrix
$$
\frac{k}{n}I +c_{n,k}Q=\frac{k}{n}(I+\frac{1}{| \lambda_1 |}Q)
$$
where the constant $\lambda_1=-\sqrt{\frac{k(n-1)}{n-k}}$ is the least eigenvalue of $Q$, then $I+\frac{1}{| \lambda_1| }Q$ is a positive operator.  Thus, computing $\|V^*DV\|$ is equivalent to computing the largest eigenvalue of $V^*DV$.
However, given an arbitrary Seidel
adjacency matrix, $S$, it no longer makes sense to introduce $k$ and $c_{n,k}$. This is because
the operator $\frac{k}{n}I +c_{n,k}S$ need not be a projection, and more importantly, $I+\frac{1}{|\lambda_1|}S$ need not be a positive
operator.

However, by Proposition \ref{moo}, there is a constant $c$ such that $I+\frac{1}{ c }S$ is a positive operator for any Seidel adjacency matrix $S$, and we can compute
$\|D(I+\frac{1}{ c }S)D\|$ by finding the largest eigenvalue of $D(I+\frac{1}{ c }S)D$. While $c_{n,k}$ is sufficient for making $I_n+c_{n,k}Q$ a positive operator for any signature matrix $Q$, it is not sufficient for making $I_n+c_{n,k}S$ a positive operator for any Seidel matrix $S$.
This is what we mean when we say that Definition \ref{inftynormdefA} is a generalization of Definition \ref{inftynormdef}.

\end{document}